\documentclass[times,sort&compress,3p]{elsarticle}
\journal{Journal of Multivariate Analysis}
\usepackage[labelfont=bf]{caption}

\usepackage{amsmath,amsfonts,amssymb,amsthm,booktabs,color,epsfig,graphicx,hyperref,url}
\usepackage{tikz}
\usepackage{comment}
\usepackage{relsize}
\theoremstyle{plain}% Theorem-like structures provided by amsthm.sty

\newtheorem{theorem}{Theorem}[section]
\newtheorem{lemma}[theorem]{Lemma}
\newtheorem{prop}[theorem]{Proposition}
\newtheorem{cor}[theorem]{Corollary}

\theoremstyle{definition}
\newtheorem{defi}[theorem]{Definition}
\newtheorem{example}[theorem]{Example}

\theoremstyle{definition}
\newtheorem{remark}[theorem]{Remark}

\newcommand{\bthe}{\begin{theorem}}
	\newcommand{\ethe}{\end{theorem}}

\newcommand{\ble}{\begin{lemma}}
	\newcommand{\ele}{\end{lemma}}

\newcommand{\bde}{\begin{definition}\rm}
	\newcommand{\ede}{\halmos\end{definition}}

\newcommand{\bco}{\begin{corollary}}
	\newcommand{\eco}{\end{corollary}}

\newcommand{\bpr}{\begin{proposition}}
	\newcommand{\epr}{\end{proposition}}

\newcommand{\brem}{\begin{remark}\rm}
	\newcommand{\erem}{\end{remark}}

\newcommand{\bproof}{\begin{proof}}
	\newcommand{\eproof}{\end{proof}}

\newcommand{\bexam}{\begin{example}\rm}
	\newcommand{\eexam}{\end{example}}

\numberwithin{equation}{section}

\newcommand{\beao}{\begin{eqnarray*}}
	\newcommand{\eeao}{\end{eqnarray*}\noindent}

\newcommand{\beam}{\begin{eqnarray}}
\newcommand{\eeam}{\end{eqnarray}\noindent}

\newcommand{\barr}{\begin{array}}
	\newcommand{\earr}{\end{array}}

\newcommand{\nat}{{\mathbb N}}
\newcommand{\ganz}{{\mathbb Z}}

\newcommand{\an}{\operatorname{an}}
\newcommand{\An}{\operatorname{An}}
\newcommand{\De}{\operatorname{De}}

\newcommand{\pa}{\operatorname{pa}}

\def\N{{\mathbb N}}

\def\Z{{\mathbb Z}}

\newcommand{\nto}{n\to\infty}

\newcommand{\stas}{\stackrel{\rm a.s.}{\rightarrow}}

\newcommand{\al}{{\alpha}}

\newcommand{\ov}{\overline}

\allowdisplaybreaks

\begin{document}

\begin{frontmatter}

\title{Max-linear models in random environment}

\author[1]{Claudia Kl\"uppelberg}
\author[2]{Ercan S\"onmez\corref{mycorrespondingauthor}}

\address[1]{Center for Mathematical Sciences, Technical University of Munich, Boltzmannstr. 3, D--85748 Garching, Germany}
\address[2]{Department of Statistics, University of Klagenfurt, Universit\"atsstra{\ss}e 65--67, 9020 Klagenfurt, Austria}
\cortext[mycorrespondingauthor]{Corresponding author. Email address: \url{ercan.soenmez@aau.at}}

\begin{abstract}
We extend previous work of max-linear models on finite directed acyclic graphs to infinite graphs as well as random graphs, and investigate their relations to classical percolation theory, more particularly the impact of Bernoulli bond percolation on such models. We show that the critical probability of percolation on the oriented square lattice graph $\Z^2$ describes a phase transition in the obtained model. Focus is on the dependence introduced by this graph into the max-linear model. We discuss natural applications in communication networks, in particular, concerning the propagation of influences.
\end{abstract}

\begin{keyword} %alphabetical order
Bernoulli bond percolation\sep
extreme value theory \sep
graphical model\sep
infinite graph\sep 
percolation\sep
recursive max-linear model.
\MSC[2020] Primary 60G70 \sep
Secondary 60K35, 62--09.
\end{keyword}

\end{frontmatter}

\section{Introduction\label{s1}}

Extreme value theory is concerned with max-stable random elements which occur as limits of normalized maxima.
The theory has progressed in recent years from classical finite models to infinite-dimensional models (see, for example, \cite{EKM, Resnick1, Resnick2}).
A monograph relevant in the infinite-dimensional context is \cite{deHF}.
Prominent models are stochastic processes in space and/or time having finite dimensional max-stable distributions (e.g., see \cite{deH,GHV,KSH}).
Such processes model extreme dependence between process values at different locations and/or time points.

Max-linear models are natural analogues of linear models in an extreme value framework.
Within the class of multivariate extreme value distributions, whose dependence structures are characterized by a measure on the sphere, they are characterized by the fact that this measure is discrete  (e.g. \cite{ST}). 

In this paper we connect two research fields, namely max-linear models on directed acyclic graphs and percolation theory. Directed acyclic graphs, also called Bayesian networks, describe conditional independence properties between random variables. Percolation, in particular Bernoulli bond percolation is a simple way of obtaining a random version of a directed acyclic graph using a sample of iid Bernoulli random variables.

We extend previous work of max-linear models on finite directed acyclic graphs  (e.g. \cite{GK,GKO,KL}) to infinite graphs.
The model allows for finite subgraphs with different dependence structures, and
we envision applications where this may play a role as, for instance, a hierarchy of communities with different communication structures. Max-linear models on directed acyclic graphs have been the subject of concrete useful applications, for example in \cite{Ein} they have been fitted in order to explain properties of European stock markets, in which the economic sector influences the tail behavior of stock returns by means of max-linear behavior. The model we propose is quite flexible, as we work on arbitrary subgraphs of the oriented 2-dimensional lattice, additionally incorporate randomness. Thus our model allows to capture arbitrary (finite) directed acyclic graphs by identifying their edges with paths in our model. Therefore, such directed acyclic graphs which can be fitted in the description of European stock markets are included in our model as well.

We investigate the relation of the infinite max-linear model to classical percolation theory, more precisely to nearest neighbor bond percolation (e.g. \cite{BR,Grimmett}). 
We focus on the square lattice $\Z^2$ with edges to the nearest neighbors, where we orient all edges in a natural way (north-east) resulting in a directed acyclic graph (DAG) on this lattice.
On this infinite DAG a random sub-DAG may be constructed by choosing nodes and edges between them at random. In a Bernoulli bond percolation DAG edges are independently declared open with probability $p\in [0,1]$ and closed otherwise.  The random graph consists then of the nodes and the open edges. 
The {percolation probability} is the probability $P_p(|C(i)|=\infty)$, with $|C(i)|$ denoting the cardinality of $C(i)$, that a given node $i$ belongs to an infinite open cluster $C(i)$, which is 0 if $p\le 1/2$ and positive for $p>1/2$.
Kolmogorov's zero-one law entails that  an infinite open cluster exists for $p> 1/2$ with probability 1, and otherwise with probability 0.

We combine percolation theory with an infinite max-linear model by assigning to each node a max-linear random variable.
Sampling a random graph by Bernoulli bond percolation, we use this subgraph for modelling the dependence in the max-linear process on the oriented square lattice.
The max-linear models we envision are recursively constructed from independent continuously distributed random variables $(Z_j)_{j\in\Z^2}$, which include the class of variables belonging to the max-domain of attraction of the Fr\'echet distribution. 
More precisely, each random variable $X_i$ on a node $i\in\Z^2$ with ancestral set $\an(i)$ exhibits the property
\beam\label{fimodel}
X_i=\bigvee_{j\in\{i\}\cup\an(i)} b_{ij} Z_j,
\eeam
in distribution on every finite DAG, where $b_{ji}$ are positive coefficients. 
As this model is defined on a random graph it is a {max-linear model in random environment}. 
According to our terminology models investigated in \cite{Jelen2,Marko0} can also be seen as models in random environment. For related work we also refer to \cite{Lebed}. To the best of our knowledge, our model is the first such model studying the impact of Bernoulli bond percolation on max-linear models in the sense that we show that classical results regarding two different phases of Bernoulli bond percolation can be transferred into two distinct phases of typical behavior of naturally investigated properties of max-linear models, namely the dependence structure in max-linear models. 

One prerequisite for this work is the fact that max-stable random variables on different nodes (that is random variables $X_i$ and $X_j$) of a DAG are independent if and only if they have no common ancestors, see \cite[Theorem 2.3]{GKO}. 
As a consequence of this and percolation theory we find for the subcritical case $p \leq 1/2$ that two random variables become independent with probability 1, whenever their distance tends to infinity. In contrast, for the supercritical case there exists $\frac{1}{2} < p^* <1$ such that for $p >p^*$ two random variables are dependent with positive probability, even when their node distance tends to infinity.

Finally, we consider changes in the dependence properties of random variables on a sub-DAG $H$ of a finite or infinite graph on the oriented square lattice $\Z^2$, when enlarging this subgraph.
The method of enlargement consists of adding nodes and edges of  Bernoulli bond percolation clusters. 
Here we start with $X_i$ and $X_j$ independent in $H$, and answer the question, whether they can become dependent in the enlarged graph. 
We evaluate critical probabilities such that $X_i$ and $X_j$ become dependent in the enlarged graph with positive probability or with probability 1. 
We find in particular that for every DAG $H$ with finite number of nodes, in the enlarged graph $X_i$ and $X_j$ remain independent with positive probability.
On the other hand, if $H$ has nodes $\Z^2$ and percolates everywhere; i.e., every connected component of $H$ is infinite, then $X_i$ and $X_j$ become dependent with probability 1 in the enlarged graph.

The recursive max-linear process $X$ from Definition~\ref{MLP} below, may be viewed as a model for the communication between members of an infinitely large network, which may be regarded as an arbitrarily large union of individual networks of finite size, where each finite network has its own communication structure. These are represented by finite sub-DAGs. {A practical example in which a max-linear process is eligible as a model is given by the exploration of web-based communication or, more generally, complex networks in which it is of considerable interest to determine (the most) influential nodes. Concerning web-based communication as a (random) graph model nodes may be identified with ranks of a certain webpage, that is realizations of the random variables  $X_i$ may correspond to concrete values of ranks. When using the PageRank as a tool in order to detect influences, several results \cite{Jelen, Volko} approve that the distribution of a rank is heavy-tailed, giving rise to employ max-linear models as an alternative to capture the evolution of influences \cite{Marko}.
	
Besides, we believe the scope of applicability of the model under discussion is quite flexible. A concrete example, mentioned in more detail in Section 5, that we propose is the modeling the course of an auction. Numerous auction houses nowadays offer live auctions, in which bidders from all over the world can place their bids on the internet. We assess the max-linear models to be suitable in terms of modeling the course of such auctions. For further discussion on this topic we refer to Section 5.
	
Another practical application, particularly of statistical interest, is the identifiability of recursive max-linear processes from concrete observations and known DAGs. In particular, \cite{GKL} provides estimation procedures for crucial parameters in the model, namely the edge-weights and max-linear coefficient matrix discussed below. More precisely, for $n\in \mathbb{N}$ let $X^1, \ldots, X^n$ be independent realizations of a max-linear model, i.e., a random vector as given in Definition \ref{MLP} below. Assume that for each $X^j$, $1\leq j \leq n$, its distribution (which is assumed to have atom-free margins on $\mathbb{R}_+$) and the underlying DAG are known. Then, according to \cite[Section 4]{GKL} one can estimate the corresponding max-linear coefficient matrix without needing further conditions.

Our paper is organized as follows.
In Section~\ref{s2} we introduce recursive max-linear models on DAGs in $\Z^2$. 
In particular, we give sufficient conditions under which max-linear models on infinite graphs are well-defined. Section~3 uses the fact that the max-linear coefficients $b_{ji}$ originate from an algebraic path analysis by multiplying edge weights along a path between nodes $j$ and $i$ with $j$ being an ancestor of $i$. 
This concept, known from finite recursive max-linear models, extends to infinite DAGs.
Example~\ref{mw} shows that the important class of max-weighted models can be extended from finite to infinite graphs such that the max-weighted property remains. 
%Dependence in max-stable models is characterized by a measure on the sphere, which is in general difficult to assess.
Recursive max-linear processes on a DAG have the nice property that independence between random variables on two different nodes is characterized by their ancestral sets. We prove that this also holds in the setting of infinite graphs.
This is the starting point of our investigation. 
Section~\ref{s4} contains the dependence results.
Here we investigate the Bernoulli bond percolation DAGs.  
In Section~\ref{s41} we prove that nearest neighbor bond percolation on $\Z^2$ yields independence of $X_i$ and $X_j$ with probability 1 provided $|i-j|\to\infty$ for $p < p^*$, whereas it yields dependence with positive probability for $p>p^*$ and some $\frac{1}{2} < p^* <1$.
In Section~\ref{s42} we investigate for $X_i$ and $X_j$, which are independent in some subgraph $H$, whether enlargement of $H$ can result in dependence between $X_i$ and $X_j$.
Finally, in Section \ref{s5} we discuss applications in communication networks in more detail and interpretations of our results in this context.

\section{Max-linear processes on directed acyclic lattice graphs}\label{s2}

This section presents a description of infinite max-linear models on directed acyclic lattice graphs. We first explain the structure of the directed graph on a lattice before we define and show the existence of a random field with finite-dimensional distributions entailing a dependence structure of max-linear type encoded in such graphs.

\subsection{Graph notation and terminology}\label{s21}

Let $\ganz^2$ be the oriented square lattice as follows (e.g. \cite{BBS,BR,D,Grimmett}). We write $i = (i_1, i_2)$ for elements in $\ganz^2$ and refer to them as nodes. The distance from $i$ to $j$ is defined in terms of the Manhattan metric given by
$$ \delta (i,j) =|i_1-j_1| + |i_2-j_2|$$
for $i,j \in \ganz^2$. We regard $\ganz^2$ as a graph by adding edges between all nodes $i,j$ with $\delta (i,j)=1$. In addition, we assume the edges to be oriented in the following manner. Denote by $\pa (i)$ and $\operatorname{ch} (i)$ the parents and children of node $i =(i_1, i_2)$, respectively. Then $j = (j_1, j_2) \in \pa (i)$ if and only if either $(j_1, j_2) = (i_1-1, i_2)$ or $(j_1, j_2) = (i_1, i_2-1)$ and, consequently, $j = (j_1, j_2) \in \operatorname{ch} (i)$ if and only if  either $(j_1, j_2) = (i_1+1, i_2)$ or $(j_1, j_2) = (i_1, i_2+1)$. 
We may write $i \rightarrow j$ if there is a directed edge from $i$ to $j$, that is if $i$ is a parent of $j$. 
The set of edges in this oriented lattice $\Z^2$ is $E (\ganz^2)$, which is a subset of  $\ganz^2 \times \ganz^2$. 
In this paper we work with graphs $G = \big( V(G), E(G) \big)$ with $V(G) \subset \ganz^2$ and $E(G) \subset E (\ganz^2)$, which are {{directed acyclic} lattice graphs}. 
{We refer to them simply as DALGs or DAGs.}
{When there is no ambiguity,} we often abbreviate $V =V(G)$ and $E = E(G)$. Thus, every node $i \in V$ has at most two children and two parents, but possibly infinitely many descendants and ancestors, denoted by $\operatorname{de} (i)$ and $\operatorname{an} (i)$, respectively. Moreover, we define $\De (i) = \{ i \} \cup \operatorname{de} (i)$ and $\An (i) = \{ i \} \cup \operatorname{an} (i)$. 
{Note} that such a DAG may have no roots, i.e., it might be the case that for all $i \in V$ we have $\an (i) \neq \emptyset$, which proves relevant for the questions we want to answer.

\subsection{Infinite recursive max-linear models}\label{s22}

We now introduce recursive max-linear processes. Let $G = ( V(G), E(G) )$ be a DAG with some possibly infinite set of nodes $V(G) \subset \ganz^2$. Moreover, we assume that all the nodes $i \in V(G)$ and  all the edges $(i,j) \in E(G)$ are equipped with prespecified (strictly) positive weigths $c_{ii}$ and $c_{ij}$, respectively. Recall from \cite[Section 1]{GK} that if $|V(G)| < \infty$ a recursive max-linear model $X = (X_i)_{i \in V(G)}$ on $G$ is given by
$$ X_i =\bigvee_{k\in\pa_G(i)} c_{ki} X_k \vee c_{ii} Z_i,\quad i\in V(G),$$
where $(Z_j)_{j\in V(G)}$ are independent continuously distributed non-negative noise variables with infinite support on $(0,\infty)$ and $\pa_G(i)$ denotes the parents of $i$ which belong to the DAG $G$. Recall that $c_{ii}$ is the weight of a node $i$. By \cite[Theorem 2.2]{GK}, applying a standard path analysis, the vector $X$ exhibits a max-linear structure, that is
\begin{equation} \label{crev1}
X_i =\bigvee_{j\in\an_G(i) \cup \{i\}} b^G_{ij} Z_j,\quad i\in V(G),
\end{equation}
where we denote by $\an_G(i)$ the ancestors of $i$ on the DAG $G$ and the matrix $B^G = (b^G_{ij})_{i,j\in V(G)}$ is given by
$$b^G_{ij} =\bigvee_{p\in P_{ji}(G)} d_{ji} (p) \textnormal{ for } j \in \operatorname{an}_G (i), \quad b^G_{ii} = c_{ii}, \textnormal{ and } b^G_{ij} =0 \textnormal{ for } j \in V(G) \setminus \big( \operatorname{an}_G (i) \cup \{i\} \big)$$
with $P_{ji}(G)$ denoting the set of all paths in $G$ from $j$ to $i$ and $d_{ji} (p)$ is defined by
$$d_{ji} (p) = c_{jj} \prod_{l=0}^{n-1} c_{k_l k_{l+1}}$$
for every path $p=[ j \rightarrow k_1 \rightarrow \ldots \rightarrow k_n = i]$. Note that this representation explicitly depends on $G$ and $B^G$ is called the max-linear coefficient matrix of $X$ with respect to $G$. We now provide an extension of this to infinite graphs, in which a family of infinitely many random variables is characterized by a graph $G$ with infinitely many nodes and edges. 
%To this end we define $c_{ii} = 1$ for every $i \in \mathbb{Z}^2$. If $p = [i \leftarrow j_0 \leftarrow j_1 \leftarrow \ldots]$ is an %infinite directed path ending in node $i$ we define
%$$d_{i} (p) =  \prod_{l=0}^{\infty} c_{j_{l+1} j_{l}}$$
%and, moreover,

\begin{defi} \label{MLP}
	We call a family of random variables $X:=\{ X_i : i \in V(G) \}$ {recursive max-linear process} if for every $i \in \mathbb{Z}^2$ the random variable $X_i$ is given by the representation
	$$ X_i =\bigvee_{j\in\an_G(i) \cup \{i\}} b^G_{ij} Z_j,$$
	provided that the latter maximum is almost surely finite, $(Z_j)_{j\in V(G)}$ are independent continuously distributed non-negative noise variables with infinite support on $(0,\infty)$ and $b^G_{ji}$ is computed by the path analysis described above.
	% with max-linear coefficient matrix $B$.\\
	%(b) \, Let $H$ be a finite DAG, then a {\em recursive max-linear model} on $H$ is defined as
	%\beam\label{Xn1}
	%X_i =\bigvee_{j\in\An(i)} b_{ij} Z_j,\quad i\in V(H),
	%\eeam
	%where $(Z_j)_{j\in V(H)}$ are independent continuously distributed non-negative noise variables with infinite support on $(0,\infty)$ and $B^H=(b^H_{ij})_{i,j\in V(H)}$ is the max-linear coefficient matrix with respect to $H$.
\end{defi} 

%Note that Definition~\ref{MLP}(b) coincides with the definition in \cite[Section 1]{GK} and thus {Definition~\ref{MLP}(a)} can be seen as an extension of the latter to models on infinite DAGs.
We now prove the existence of a stochastic process with the dependence structure described by infinite recursive max-linear processes as in Definition~\ref{MLP}. Furthermore, we give a sufficient condition on the weights under which there exists a stochastic process $X=\{ X_i : i \in V(G) \}$ as in Definition~\ref{MLP}.

We illustrate the procedure of extending max-linear models in case of two finite subgraphs of the lattice. Assume that $(V_1 = \{ i_1, \ldots i_m\} , E_1)$ and $(V_2 = \{ j_1, \ldots j_n\} , E_2)$ are two such finite subgraphs. Suppose that $X^1= (X_{i_1}, \ldots, X_{i_m})$ and $X^2= (X_{j_1}, \ldots, X_{j_n})$ are the corresponding recursive max-linear models with coefficient matrices $B_1$ and $B_2$, respectively with recursive max-linear representation
\begin{align*}
	X_{i_k} &= \bigvee_{r\in\pa^1(i_k)} c_{r i_k} X_r\vee c_{i_k i_k} Z_{i_k},\quad k \in \{1,\dots,m \},	&X_{j_l} = \bigvee_{s\in\pa^2(j_l)} c_{s j_l} X_s\vee c_{j_l j_l} Z_{j_l},\quad l\in \{1,\dots,n\},
\end{align*}
with $\pa^1$ and $\pa^2$ denoting the parents with respect to the graphs $(V_1, E_1)$ and $(V_2, E_2)$, respectively. Consider the enlarged finite graph $(V= V_1 \cup V_2, E=E_1 \cup E_2)$. 
Then a recursive max-linear model on this graph is given by
\begin{align*}
	Y_{i_k} &= \bigvee_{r\in \pa^{1,2}(i_k)} c_{r i_k} Y_r\vee c_{i_k i_k} Z_{i_k},\quad \quad k\in \{1,\dots,m\}, &Y_{j_l} = \bigvee_{s\in \pa^{1,2}(j_l)} c_{s j_l} Y_s\vee c_{j_l j_l} Z_{j_l},\quad l\in \{1,\dots,n\},
\end{align*}
with $\pa^{1,2}$ denoting the parents with respect to the graph $(V, E)$ and coefficient matrix $B^{1,2}$ calculated by the usual path analysis (see \cite[Theorem 2.2]{GK}). In the following for notational simplicity write $ b_{ij} =  b^G_{ij}$.

%Consider the enlarged graph $(V= V_1 \cup V_2, E=E_1 \cup E_2)$. If we reapply the path analysis specified above on this new graph we can define a recursive max-linear model $Y= (Y_{i_1}, \ldots, Y_{i_m}, Y_{j_1}, \ldots, Y_{j_n})$ with coefficient matrix $B$. Inductively, we can combine $n$ arbitrary different and finite subgraphs to obtain a max-linear model on a larger graph. In the following Lemma we show that, using consistency arguments, it is possible to extend infinitely many finite subgraphs such that we obtain a stochastic process as defined in Definition~\ref{MLP}.
\begin{lemma} \label{MLPexist}
	Let $\alpha \in (0, \infty)$ and assume that
	\begin{align} \label{revcon}
		\sum_{j \in \mathbb{Z}^2} (b_{ij})^\alpha < \infty \quad \forall i \in \mathbb{Z}^2.
	\end{align}
	Moreover, assume that $(Z_{k})_{k \in \ganz ^2}$ is a sequence of independent standard $\alpha$-Fr\'echet distributed noise variables. Then there exists a max-linear process as in Definition~\ref{MLP}.
\end{lemma}

\begin{proof}[\textbf{\upshape Proof:}]
	We prove that the weighted maximum of infinitely many noise variables is finite with probability one. Indeed, let $x \in (0, \infty)$ and define
	$$X_i =\bigvee_{j\in\An_G(i) } b_{ij} Z_j$$
	for $ i \in \mathbb{Z}^2.$ Then
	\begin{align*}
	\operatorname{Pr}(X_i \leq x) & \leq \operatorname{Pr} \big( b_{ij} Z_j \leq x \, ,\forall j \in \An_G(i) \big) = \prod\limits_{j \in \An_G(i)} \operatorname{Pr}(Z_j \leq \frac{x}{b_{ij}} ) = \prod\limits_{j \in \An_G(i)} \exp \big( -x^{-\alpha}  (b_{ij})^\alpha \big) \\
		& = \exp \Big(  -x^{-\alpha} \sum_{j \in \An_G(i)} (b_{ij})^\alpha \Big) \in (0,1)
	\end{align*}
	by condition \eqref{revcon}. Thus, $X_i$ has a Fr\'echet distribution. Moreover, let $i_1, \ldots, i_d \in \mathbb{Z}^2$, $d \geq 1$, and $x_{i_1}, \ldots, x_{i_d} \in (0, \infty)$. Then, by a simple calculations, the finite-dimensional distributions of $X=\{ X_i : i \in V(G) \}$ are given by
	\begin{align*}
		\operatorname{Pr}(X_{i_1} \leq x_{i_1}, \ldots, X_{i_d} \leq x_{i_d}) &= \operatorname{Pr} \Bigg( \bigvee_{j\in\An_G(i_k) } b_{i_kj} Z_j \leq x_{i_k} , k=1, \ldots, d \Bigg) = \prod_{k=1}^{d}\operatorname{Pr} \Bigg(  Z_j \leq \bigwedge_{i_k \in\De_G(j) } \frac{x_{i_k}}{b_{i_kj}} \, , \forall j \in V(G) \Bigg) \\
		& = \exp \Big( -\sum_{j=1}^{d} \bigvee_{k\in\De_G(i_j) } \big( \frac{b_{k{i_j}}}{x_k} \big) ^\alpha \Big).
	\end{align*}
	
	In particular, every recursive max-linear process in which the noises are standard $\alpha$-Fr\'echet as in Definition~\ref{MLP} exhibits these finite-dimensional distributions.
\end{proof}

Consider the following example of a (max-weighted) max-linear process $X$ with weights satisfying the assumption \eqref{revcon}, see also Example \ref{mw} below. For simplicity assume that for every $i,j \in \mathbb{Z}^2$ with $j \in \An (i)$ there is only one path from $j$ to $i$. Let $p=[ j=k_0 \rightarrow k_1 \rightarrow \ldots \rightarrow k_n = i]$ be the path from $j$ to $i$. Assume that the edges are equipped with the weights
$$c_{k_\nu k_{\nu+1}} = \Big( \frac{1}{|k_\nu|+1} \Big)^{2}, \quad 0 \leq \nu \leq n-1,$$
where $c_{ii} =1$ for every $i$. Note that condition \eqref{revcon} is satisfied. In particular, since the weights are vanishing, this shows that the larger the distance between a node and its ancestor, the smaller the contribution of the ancestor, which is a natural property to hold.

{Different blocks of the matrix $B$ may correspond to distinct communities with different communication structure. The values of the random variables $X_i$ may correspond to extreme observations.}

The following limit result, which can be found in \cite[Lemma~2.1(iv)]{ST}, shows that we can regard a max-linear model on an infinite graph as a limit of a sequence of max-linear models on finite graphs. We precise this now in the following Remark. 

\begin{defi}
	In the following we say that a sequence of subgraphs $(V_n, E_n)_{n\in\N}$ of  a graph $(V,E)$ tends to $(V,E)$ if for every $j \in V$ and $e \in E$ there exists $n \in \mathbb{N}$ such that also $j \in V_m$ and $e \in E_m$ for every $m \geq n$.
\end{defi}

\brem
If $(Z_j)_{j\in\Z^2}$ are independent standard $\al$-Fr\'echet random variables and $(V_n,E_n)_{n\in\N}$ is a sequence of finite sub-DAGs of the oriented square lattice $\Z^2$ 
then from Lemma~\ref{MLPexist} we know that for each $n \in \mathbb{N}$
\beam\label{Xn}
X_i^{(n)} = \bigvee _{j \in V_n} b_{ij} Z_j, \quad i \in V_n,
\eeam
has $\al$-Fr\'echet distribution with scale parameter $(\sum_{j\in V_n} (b_{ij})^\al)^{1/\al}$.
Suppose that the sequence of DAGs $(V_n, E_n)_{n\in\N}$ tends to a DAG $(V,E)$ with infinitely many nodes as $\nto$ and that
$$ b:=\lim_{n \to \infty} \sum_{j\in V_n} (b_{ij})^\al$$
exists. Then $X_i^{(n)}\,\stas\, X_i,$ $\nto,$ where $X_i$ has $\al$-Fr\'echet distribution with scale parameter $b^{1/\al}<\infty$. If the series 
$$\lim_{n \to \infty} \sum_{j\in V_n} (b_{ij})^\al$$
diverges then $X_i^{(n)}\stas \infty$ as $\nto$.
\erem

{Provided $X_i$ is almost surely finite, the value at node $i$ may originate in a large number of values along an infinite path. 
	As there may be many sequences of subgraphs with limit $(V,E)$ the random variable at node $i$ depends on this sequence. There may be sequences of subgraphs or paths in subgraphs leading to very large values of $X_i$, as a consequence, all its descendants also become large.}

%\brem\label{rem23}
%The finite max-linear coefficient matrix $B$ in Definition~\ref{MLP}(b)  can be calculated from the positive weights $c_{ki} = c_{ki} %(H)$, $i \in V(H), k \in \{i\}\cup\pa (i)$, assigned to the edges of $H$, for every given finite DAG $H$ by a simple path analysis. 
%{The max-linear coefficient $b_{ji}$ is positive if and only if  $j\in\An(i)$ and its value is the maximum of products of weights along %all directed paths between $j$ and $i$; cf.  \cite[Theorem~2.2]{GK}.}

%For the infinite DAG $G$,
%by applying this path analysis we immediately see that the max-linear coefficient matrix heavily depends on the chosen finite %sub-%DAG $H$. Thus for distinct sub-DAGs we obtain different coefficient matrices. In general we cannot identify a recursive %max-linear process with a unique max-linear coefficient matrix. In particular, this is not possible if the chosen DAG $G$ has no %roots. Therefore, in the {next} section we first treat the case that $V(G) \subset \nat_0^2$, so that every node has at most %f%initely many ancestors.
%\erem
We now treat the case that $V(G) \subset \nat_0^2 = \nat_0\times \nat_0$, so that every node has at most finitely many ancestors and give an example of a {{max-weighted process}}. To this end,we consider infinite DAGs on $\N^2_0$, which we view as a prototypical sub-DAG with infinitely many nodes of the oriented square lattice $\Z^2$, such that each node has at most finitely many ancestors. 

\subsection{Max-weighted process}

Let $G = (V,E)$ be a DAG with $V \subset \nat_0^2 = \nat_0\times \nat_0$ 
%(where $\N^2$ includes the axes $(i_1,0)$ and $(0,i_2)$ for all $i_1,i_2\in\N$ and the point $(0,0)$)
and corresponding edges $E$. Assume a recursive max-linear process $X = \{ X_i : i \in V\}$ on $G$. In the following the aim is to give a canonical choice of a possible max-linear coefficient matrix $B$ associated with the process $X$ and to introduce a process that we call max-weighted.

Assume that the edges of $G$ are equipped with positive weights $c_{ki}$ for every $i \in V$ and {$k \in  \{i\}\cup\pa (i)$}. For $n \in \nat$ let $G_n = (V_n, E_n)$ be the DAG with nodes $V_n = \{ i = (i_1, i_2) \in V : i_1 + i_2 \leq n \}$ and corresponding edges taken from $E$, so that $\lim_{n \to \infty} G_n = G.$ 
By Definition~\ref{MLP} there are independent non-negative noise variables  {$(Z_i)_{ i \in V_n}$ with infinite support on $(0,\infty)$}  and a max-linear coefficient matrix $B = (b_{ij})_{i,j\in V_n}$ with non-negative entries such that
$X_i^{(n)}$ is as in \eqref{Xn}.
%has 
%\beam\label{Xn}
%X_i^{(n)} = \bigvee _{j \in V_n} b_{ji} Z_j, \quad i \in V_n.
%\eeam
Indeed the entries $b_{ij}$ may be derived from the path analysis mentioned in Section 2. 
This in particular shows that  for $i \in V$ the $b_{ij}$ do not depend on the descendants $\operatorname{de} (i)$. 
{Thus, an infinite max-linear coefficient matrix $B$ is built up from increasing finite blocks representing $V_n$ for increasing $n\in\N$.}

{For a communication network on $\N_0^2$ the representation \eqref{Xn} reduces to a maximum over finitely many random variables, for instance, the root $0$ influences all the other nodes in the network. Hence, if the root node happens to hold the maximum of all $Z_j$ for $j\in\N_0^2$ its influence may dominate the whole network, although by the max-linear coefficient matrix $B$ all other nodes may have different realisations.}

As there may be several paths between nodes with different path-weights, so-called max-weighted models with same paths-weights along all possible directed paths between two nodes play an important role.
We now give an example of such a max-linear process relying on the definition of max-weighted models presented in \cite[Definition 3.1]{GK} and discussed in \cite[Section 3]{GKO}.  Resulting as a limit of max-weighted paths, we may call such a process max-weighted.

\begin{example} [Max-weighted process]\label{mw}
	Let $V = \nat_0^2$ be the set of nodes and assume oriented edges between all nodes $i,j$ with $\delta (i,j) = 1$. Start with a subgraph in which the set of nodes is bounded and of the form $V_n = \{ (i_1,i_2) \in \mathbb{N}_0^2 : i_1+i_2 \leq n \}$ for some $n \in \mathbb{N}_0$ and the corresponding set of edges is denoted by $E_n$. Assume that the corresponding model is max-weighted so that every entry of the max-linear coefficient matrix is given by $b_{ji} =  d_p \big( (j_1,j_2) , (i_1,i_2) \big)$, where $d_p \big( (j_1,j_2) , (i_1,i_2) \big)$ is calculated by a path analysis along the edge-weights as in equation (1.5) in \cite{GK}. Since the model is max-weighted, $d_p \big( (j_1,j_2) , (i_1,i_2) \big)$ is the same value for every path $p$ from $i$ to $j$ and thus  we can write $d_p \big( (j_1,j_2) , (i_1,i_2) \big) = d \big( (j_1,j_2) , (i_1,i_2) \big)$, since the latter value is independent of the chosen path $p$. We now show that the DAG can be enlarged in such a way that the enlarged new subgraph is again max-weighted. Moreover, this procedure can be executed infinitely often. 
	Let {$n \geq 1$} and assume that we add a node, say $(\ell_1,\ell_2)$ which we connect with the nodes $(\ell_1-1,\ell_2)$ and $(\ell_1,\ell_2-1)$ in $V$ by two edges with corresponding weights $c \big( (\ell_1-1,\ell_2), (\ell_1,\ell_2) \big)$ and $c \big( (\ell_1,\ell_2-1), (\ell_1,\ell_2) \big)$. By choosing these appropriately we can ensure that the new model is again max-weighted. More precisely, we choose the weights satisfying
	$$ c \big( (\ell_1-1,\ell_2), (\ell_1,\ell_2) \big) = \frac{c \big( (\ell_1,\ell_2-1), (\ell_1,\ell_2) \big) \cdot d \big( (0,0) , (\ell_1-1, \ell_2) \big)}{d \big( (0,0) , (\ell_1, \ell_2-1) \big)}.$$
	We now show that the enlarged DAG again leads to a max-weighted model. Let $p_1$ be a path from the root to $(\ell_1,\ell_2)$ containing $(\ell_1-1,\ell_2)$ and let $p_2$ be such a path containing the node $(\ell_1,\ell_2-1)$. Then we have by definition
	\begin{align*}
		d_{p_1} \big( (0,0) , (\ell_1,\ell_2) \big)  & = d \big( (0,0) , (\ell_1,\ell_2-1) \big) \cdot  c \big( (\ell_1-1,\ell_2), (\ell_1,\ell_2) \big) =  c \big( (\ell_1,\ell_2-1), (\ell_1,\ell_2) \big) \cdot d \big( (0,0) , (\ell_1-1,\ell_2) \big) \\
		& =  d_{p_2} \big( (0,0) , (\ell_1,\ell_2) \big) .
	\end{align*}
	Thus every path from the root to $(\ell_1,\ell_2)$ is max-weighted and this shows that the new model is max-weighted.
\end{example}

{In the following section we return to DAGs on $\Z^2$, which allow for infinitely many ancestors.} We consider percolation (dependence) properties between two fixed nodes $i$ and $j$ on $\Z^2$.

\section{Common ancestors and dependence structure}\label{s3}

In this section we let $G = (V,E)$ be an arbitrary, possibly infinite DAG with nodes
$V \subset \ganz^2$ and oriented edges $E$. Furthermore, we let $X$ be a recursive max-linear process on $G$ as in Definition~\ref{MLP}.

The following result is an analogue to \cite[Theorem 2.3]{GKO} and its proof {justifies the extension of the arguments to infinite dimension.}

\begin{prop} \label{TDC}
	Let $X:=\{ X_u : u \in V(G) \}$ be a recursive max-linear process and $i,j \in V(G)$. The following statements are equivalent: \newline \newline
\textnormal{(i) $X_i$ and $X_j$ are independent,} \newline
\textnormal{(ii) $\operatorname{An} (i) \cap \operatorname{An} (j) = \emptyset$}.

\end{prop}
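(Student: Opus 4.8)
The plan is to prove the equivalence by working with the explicit finite-dimensional distributions established in Lemma~\ref{MLPexist}. Since independence of $X_i$ and $X_j$ is a statement about their joint bivariate law, and since any two fixed nodes $i,j$ lie in some finite sub-DAG $H$ (indeed in some $(V_n,E_n)$ as discussed after Remark~\ref{rem23}), I would reduce the entire problem to the finite setting. This is the crucial reduction: it lets me invoke the finite theory of \cite{GKO} essentially verbatim, so that the ``extension to infinite dimension'' amounts to checking that the relevant ancestral sets $\An(i)$ and $\An(j)$, and the coefficients $b_{ki},b_{kj}$, are unchanged when computed in $H$ versus in the full graph $G$. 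By the path-analysis remark (Remark~\ref{rem23}) the coefficient $b_{ki}$ depends only on directed paths from $k$ into $i$ and hence only on $\An(i)$, not on the rest of $G$; so enlarging the finite sub-DAG to contain all of $\An(i)\cup\An(j)$ (which is finite once we are on $\N_0^2$, or finite after intersecting with the relevant cone in $\Z^2$) does not alter the joint law of $(X_i,X_j)$.

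For the implication (ii)$\Rightarrow$(i), I would assume $\An(i)\cap\An(j)=\emptyset$ and read off independence directly from formula~\eqref{MLproofone} specialised to $d=2$. Writing the bivariate exponent as a sum over $k\in\An(i)\cup\An(j)$ of $(b_{ki}/x_1)^\al\vee(b_{kj}/x_2)^\al$, the disjointness of the ancestral sets forces, for each index $k$, at most one of $b_{ki},b_{kj}$ to be nonzero. Hence every term in the sum involves only $x_1$ or only $x_2$, the exponent splits as a sum of a function of $x_1$ alone and a function of $x_2$ alone, and the joint distribution function factorises as $G_i(x_1)\,G_j(x_2)$. This gives independence.

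For the converse (i)$\Rightarrow$(ii) I would argue by contraposition: suppose there exists a common ancestor $k\in\An(i)\cap\An(j)$. Then $b_{ki}>0$ and $b_{kj}>0$ (positivity of coefficients along existing directed paths, again by Remark~\ref{rem23}), so the $k$-th term $(b_{ki}/x_1)^\al\vee(b_{kj}/x_2)^\al$ genuinely couples $x_1$ and $x_2$; because this term is not additively separable in $(x_1,x_2)$, the exponent in \eqref{MLproofone} is not a sum of a function of $x_1$ and a function of $x_2$, and therefore $G_{i,j}$ does not factorise. To make this rigorous rather than merely plausible I would exhibit the dependence at the level of the noise variables: the common ancestor contributes the same $Z_k$ to both $X_i$ and $X_j$ through \eqref{Xn1}, and since $Z_k$ is continuously distributed with infinite support on $(0,\infty)$, the event that $b_{ki}Z_k$ attains the maximum in $X_i$ has positive probability and is positively correlated with the corresponding event for $X_j$; this yields $\cov(\mathbbm{1}\{X_i>s\},\mathbbm{1}\{X_j>t\})\neq 0$ for suitable thresholds, contradicting independence.

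I expect the main obstacle to be the converse direction, and specifically making the non-factorisation argument airtight. The potential subtlety is that a single shared $Z_k$ must produce a genuine, detectable dependence even though many other independent noise variables also feed into $X_i$ and $X_j$ and may mask the common term. The clean way around this is to choose thresholds large enough that only the dominant coefficient matters, using the infinite-support and continuity assumptions on the $Z_j$ to guarantee that the event ``$Z_k$ is the argmax for both $X_i$ and $X_j$'' carries strictly positive probability; the continuity also rules out ties, which keeps the argmax well defined almost surely. Once this positive-probability coupling event is isolated, the contradiction with independence is immediate, and the finite-dimensional reduction ensures no measure-theoretic issues from the infinite index set arise.
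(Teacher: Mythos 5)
Your proposal is correct and its skeleton matches the paper's proof: reduce to a finite sub-DAG containing $i$ and $j$, get (ii)$\Rightarrow$(i) by a factorization, and get (i)$\Rightarrow$(ii) by exhibiting a positive-probability event on which the common ancestor's noise variable realizes both maxima, so that $(X_i,X_j)$ puts positive mass on a ray---impossible for independent random variables with continuous distributions. The one substantive difference is in (ii)$\Rightarrow$(i): you factorize the explicit joint distribution function \eqref{MLproofone}, which was derived under standard $\alpha$-Fr\'echet noise, whereas Definition~\ref{MLP}(b) permits arbitrary independent continuous noise with infinite support on $(0,\infty)$; the paper instead factorizes at the level of the random variables---when $\An(i)\cap\An(j)\cap V(H)=\emptyset$ the two maxima in \eqref{Xn1} run over disjoint families of independent $Z_k$, hence are independent---which covers the general case at no extra cost. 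Your intermediate idea of detecting dependence via covariances of indicators at large thresholds is the one step I would drop: since the noise variables need not be identically distributed, a heavier-tailed $Z_m$ feeding only into $X_i$ can dominate the tail and mask the common ancestor, so ``only the dominant coefficient matters'' is not guaranteed in this generality; fortunately your final argmax-coupling argument (which is the paper's: with positive probability $X_i=b_{ki}Z_k$ and $X_j=b_{kj}Z_k$ simultaneously, hence $P\big(X_i=(b_{ki}/b_{kj})X_j\big)>0$, contradicting independence plus continuity) does not need it. Incidentally, your ratio formulation is slightly more careful than the paper's, which states the conclusion as $P(X_i=X_j)>0$ and thereby implicitly treats the case $b_{li}=b_{lj}$; the contradiction is the same either way.
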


\begin{proof}[\textbf{\upshape Proof:}]
	The proof extends \cite[Theorem 2.3]{GKO}. By Definition \ref{MLP} there exist independent noise variables $Z_k$, $k \in V(G)$, with infinite support on $(0, \infty)$ and a matrix $B = (b_{uk})$ such that
	$$ X_u =\bigvee_{k\in\An(u)} b_{uk} Z_k,\quad u\in V(G).$$
	Thus $X_i$ and $X_j$ are independent if and only if $\operatorname{An} (i) \cap \operatorname{An} (j) = \emptyset$. Indeed, first assume that $\operatorname{An} (i) \cap \operatorname{An} (j) = \emptyset$. Then we obtain
	\begin{align*}
		\operatorname{Pr} (X_i \leq x_i , X_j \leq x_j) & = \operatorname{Pr} \big( \bigvee_{k\in\An(i)} b_{ik} Z_k \leq x_i , \bigvee_{k\in\An(j)} b_{jk} Z_k \leq x_j \big) = \operatorname{Pr} \big( \bigvee_{k\in\An(i)} b_{ik} Z_k \leq x_i \big) \operatorname{Pr}\big(  \bigvee_{k\in\An(j)} b_{jk} Z_k \leq x_j \big) \\
		& = \operatorname{Pr} (X_i \leq x_i ) \operatorname{Pr}( X_j \leq x_j)
	\end{align*}
	for every $x_i, x_j \in (0, \infty)$, by independence of the noise variables $Z_k$, $k \in V(G)$. On the other hand assume that $X_i$ and $X_j$ are independent. By way of contradiction let us suppose that $\operatorname{An} (i) \cap \operatorname{An} (j)  \neq \emptyset$. Let $[-n,n]^2 = \{(x_1,x_2) \in \mathbb{Z}^2 : |x_1| + |x_2| \leq n \}$ for $n \in \N$. Consider a finite subgraph $H$ with $V(H) = [-n,n]^2 \cap V(G)$, where $n \in \mathbb{N}$ is so large such that
	$$\operatorname{An} (i) \cap \operatorname{An} (j) \cap [-n,n]^2  \neq \emptyset, \quad i,j \in [-n,n]^2,$$
	and $H$ contains all the edges of $G$ that are connecting nodes in $[-n,n]^2$. Write $V(H) = \{ i,j, i_1, \ldots, i_k\}$ for some $k \in \mathbb{N}$. Observe that $(X_i, X_j, X_{i_1}, \ldots X_{i_k})$ is a max-linear model on $H$ with almost surely finite, but not necessarily independent innovation noise variables given by
	$$\tilde{Z}_m =  \bigvee_{k\in \big(\An(m) \setminus V(H) \big) \cup \{m\}} b_{mk} Z_k , \quad m \in V(H).$$
	Let $l \in \operatorname{An} (i) \cap \operatorname{An} (j) \cap V(H)$. Then, by the assumptions on the noise variables $\tilde{Z}_k$, $k \in V(H)$, we have
	$$ \bigvee_{k\in\An_H(i)} b_{ik} \tilde{Z}_k = b_{il} \tilde{Z}_l, \quad \bigvee_{k\in\An_H(j)} b_{jk} \tilde{Z}_k = b_{jl} \tilde{Z}_l$$
	with positive probability, which implies that
	$$\operatorname{Pr} (X_i = \frac{b_{il}}{b_{jl}}X_j) >0.$$
	But by continuity of the noise variables, this contradicts the fact that $X_i$ and $X_j$ are independent. This finishes the proof.
\end{proof}

Having characterized the dependence between two random variables we are now interested in the following. We use Bernoulli bond percolation to generate random DAGs on the oriented square lattice $\Z^2$ and, thus, random dependence structures. 

{We want to answer the following question: given an extreme quantity,  observed at two nodes $i$ and $j$, is there a common cause in the network (a common ancestor) or not?}

\section{Bernoulli bond percolation DAGs}\label{s4}

The main purpose of this section is to {construct} max-linear models on randomly obtained DAGs with a possibly infinite number of nodes in order to investigate a randomized dependence structure. 

%At first, we regard random DAGs derived from open Bernoulli clusters and provide results related to the probability concerning dependence properties of two given nodes $X_i$ and $X_j$. 
%Then, inspired by methodologies in \cite{Okamura}, in the subsequent section we randomly enlarge a given subgraph and ask for the probability that two given nodes $X_i$ and $X_j$ become dependent. 
In view of Proposition~\ref{TDC} the probability that two random variables {$X_i$ and $X_j$ on the random graph} are {dependent} is nothing {else} than the probability that $i$ and $j$ have common ancestors inside the random open cluster containing nodes $i$ and $j$.
Our setting is a max-linear model on the oriented square lattice and percolation on this simple graphical model. 
This is a first step of linking percolation with max-linear models, and we envision further results on more sophisticated graphs as can be found, for instance, in \cite{Heydenreich} and \cite{Remco}.

\subsection{Max-linear models on random open clusters}\label{s41}

{Recall that we consider the oriented square lattice $\Z^2$. For this oriented model, the
	open cluster at 0 is usually defined as the set of all points we can reach from the origin by travelling along open edges in the direction of the orientation; see \cite{BBS,D}, or \cite[Section~12.8]{Grimmett}. As this open cluster always has root 0, all nodes $i$ and $j$ would have at least common ancestor 0, and would make the problem discussed below trivial. 
	Consequently, we consider unoriented, but not undirected, paths in \eqref{hamm2} as we will make precise below.
}

Let us first recall the framework of Bernoulli bond percolation from any  book on percolation as e.g. \cite{BR,Grimmett}.
Given the oriented square lattice $\Z^2$ with edge set $E\subset \Z^2\times \Z^2$, a {(bond) configuration} is a function $\omega : E\to \{0,1\}, e\mapsto\omega_e$.
An edge $e$ is open in the configuration $\omega$, if and only if $\omega_e=1$, so configurations correspond to open subgraphs. 
Recall from Section~\ref{s2} that in our setting open edges are directed, hence a configuration is
a DAG {denoted} by $(V,E)$ with $V \subset \mathbb{Z}^2$  and directed edges $E$. 
Each edge is declared open with probability $p$ and closed otherwise, different edges having independent designations. 
This gives the Bernoulli measure $P_p$, $p \in [0,1]$ on the space $\Omega = \{ 0,1 \} ^E$ of configurations. 
The $\sigma$-field $\mathcal{F}$ is generated by the finite-dimensional cylinders of $\Omega$. 
In summary, the probability space is $(\Omega, \mathcal{F}, P_p)$.

Let $C(k)$ be the {open cluster} containing the node $k\in V$.
% i.e. it is the \CK{ connected component } of the DAG $(V,E)$ containing $k$.
The distribution of $|C(k)|$ is, by the translation-invariance of the measure $P_p$, well-known to be independent of $k \in V$, so that we assume in the following $k=0 \in V$ without loss of generality. 
If $|C(0)|$ denotes the (random) number of nodes of $C(0)$ then $P_p ( |C(0)| = \infty)$ is called the {percolation probability}. This probability depends on $p\in [0,1]$, and 
{Hammersley's critical percolation probability} is defined as
\begin{equation} \label{hamm}
p^1_c (V) = \inf \{ p \in (0,1) : P_p \big( |C(0)| = \infty \big) > 0 \}.
\end{equation}
Thus, for $p > p^1_c(V)$ it is possible to generate infinite open clusters with positive probability. 
{By Kolmogorov's zero-one law (cf. \cite[Theorem~1.11 ]{Grimmett}) there exists an infinite open cluster with probability 1 for $p> p_c^1(V)$, and otherwise with probability 0.}
Similarly, for two different given nodes $i,j \in V$ we can define $C(i,j)$ as the open cluster containing $i$ and $j$, with the convention that $C(i,j)= \emptyset$ if there is no path (of open edges) from $i$ to $j$. 
%Again, by translation-invariance the distribution of its number of nodes only depends on the edge distance $|i-j|$ and in the following we consider $C(\ell,0)$ with $|\ell| = |i-j|$.
For notational simplicity in the following without loss of generality we assume that $j=0$.
{The following definition is related to the radius of a finite open cluster as investigated in \cite[Sections~6.1 and~8.4]{Grimmett}.}

As in \eqref{hamm} we define the critical probability 
\begin{equation}\label{hamm2}
p^2_c (V) = \inf \{ p \in (0,1) : P_p \big( |C(i,0)| = \infty \big) > 0 \},
\end{equation}
where we use the convention that $|C(i,0)| > 0$ if and only if there exists a (possibly undirected) path, 
\beam\label{openpath}
{[0  \leftrightarrow i]} := [ 0 = k_0 \leftrightarrow k_1 \leftrightarrow \dots \leftrightarrow k_n = i]
\eeam 
of open edges from $0$ to $i$, called an {open path}.

It is not difficult to see that $p^1_c (V) = p^2_c (V)$. Indeed, let $A = \{ 0 \leftrightarrow i \}$ be the event that there exists an open path from the origin to node $i$. 
Note that this event has strictly positive probability $P_p(0 \leftrightarrow i )$, also called the {two-point connectivity function} in \cite[Section~8.5]{Grimmett}. Thus
\begin{align*}
	P_p \big( |C(i,0)| = \infty \mid A \big)  = P_p \big( |C(0)| = \infty \mid A \big) .
\end{align*}
Let $\omega = (\omega _e)_{e\in E}, \omega ' = (\omega '_e)_{e\in E} \in \Omega$ with $\omega _e \leq \omega '_e$ for every $e \in E$. We recall that an event  {$A \subset \Omega$ is {increasing} if $\omega \in A$ implies that $\omega ' \in A$.} Since all the considered events are increasing, the Fortuin–Kasteleyn–Ginibre (FKG)-inequality \cite[Theorem 2.4]{Grimmett} further yields
\begin{align*}
	\frac{ P_p \big( |C(i,0)| = \infty \big)}{P_p(A)} = \frac{P_p \big( \{ |C(i,0)| = \infty\} \cap A \big)}{P_p(A)}  \geq P_p \big( |C(0)| = \infty \big).
\end{align*}
Since $P_p(A) >0$ altogether we obtain
$$ P_p \big( |C(i,0)| = \infty \big) > 0 \Leftrightarrow P_p \big( |C(0)| = \infty \big) > 0$$
and thus $p^1_c (V) = p^2_c (V)$. Recall that the critical percolation probability {$p^1_c (\Z^2)$} on the whole unoriented square lattice $\mathbb{Z}^2$ equals $\frac{1}{2}$ and moreover satisfies $P_{\frac{1}{2}}\big( |C(i,0)| = \infty \big) = 0$ (\cite[Chapter 11]{Grimmett}).

Given such {an infinite open cluster,} we are interested in the probability that the random variables $X_i$ and $X_j$ on the random DAG are independent. First, we give a formal definition of a max-linear model on a random environment.

\begin{defi}
	Let  $\{ X_u : u \in \ganz^2\}$ be a max-linear model. Let $\omega \in \Omega$ be a configuration, i.e. a realization of a sequence of iid Bernoulli random variables indexed by the possible edge-set, in which an edge $e \in E$ is present if and only if $\omega_e = 1$. Let $V(\omega)$ bet its corresponding set of nodes. The process $\{ X_u : u \in V(\omega) \}$ is called a {max-linear model in random environment}.
\end{defi}

From now on we suppose that $\{ X_u : u \in V(\omega) \}$ is a max-linear model in random environment and we investigate the probability $P_p \big( X_i \textnormal{ and } X_j \textnormal{ are independent} \big)$. That is to say, we are mainly interested in the max-linear process $\{ X_i : i \in C(i,0) \}$ on the random sub-DAG with nodes $V ( C(i,0) )$ and edges $E ( C(i,0) )$.

We observe that the events 
$$\{ X_i \textnormal{ and } X_j \textnormal{ are dependent} \} = \{ \operatorname{An} (i) \cap \operatorname{An} (0) \neq \emptyset \}$$
and $ \{ \operatorname{De} (i) \cap \operatorname{De} (0) \neq \emptyset \}$ are increasing as defined above.

Let 
\begin{align} \label{revsigma}
	\Sigma := \{ \operatorname{An} (i) \cap \operatorname{An} (0) \neq \emptyset \} \cup \{ \operatorname{De} (i) \cap \operatorname{De} (0) \neq \emptyset \}
\end{align}
denote the event that node $i$ and node $0$ have common ancestors or descendants. From arguments given below, it is not difficult to see that 
$$  \frac{1}{2} P_p ( \Sigma) \leq P_p \big( \{ \operatorname{An} (i) \cap \operatorname{An} (0) \neq \emptyset \} \big) .$$
%Before considering the supercritical phase $p > \frac{1}{2}$, 
The following lemma gives a refinement of this bound, which may be of interest in its own right.

\begin{lemma}\label{el}
	For $0 \leq p \leq 1$ we have
	$$ P_p \big( \{ \operatorname{An} (i) \cap \operatorname{An} (0) \neq \emptyset \} \big) \geq 1 - \big( 1 - P_p ( \Sigma) \big) ^{\frac{1}{2}}.$$
\end{lemma}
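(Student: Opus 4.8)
The plan is to reduce the stated inequality to a positive-correlation estimate and then close it using a symmetry of the oriented lattice that interchanges ancestors and descendants. Write $A := \{\An(\ell)\cap\An(0)\neq\emptyset\}$ and $D := \{\De(\ell)\cap\De(0)\neq\emptyset\}$, so that $\Sigma = A\cup D$ and hence $\Sigma^c = A^c\cap D^c$. Since all probabilities lie in $[0,1]$, the asserted bound $P_p(A)\ge 1-(1-P_p(\Sigma))^{1/2}$ is equivalent, after rearranging and squaring, to
$$ P_p(A^c)^2 \;=\; \big(1-P_p(A)\big)^2 \;\le\; 1-P_p(\Sigma) \;=\; P_p(\Sigma^c) \;=\; P_p(A^c\cap D^c). $$
So it suffices to prove $P_p(A^c)^2 \le P_p(A^c\cap D^c)$.

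First I would invoke the FKG inequality. As already recorded just above, $A$ and $D$ are increasing events, hence $A^c$ and $D^c$ are decreasing. The positive correlation of decreasing events furnished by the FKG inequality \cite[Theorem~2.4]{Grimmett} gives
$$ P_p(A^c\cap D^c)\;\ge\; P_p(A^c)\,P_p(D^c). $$
It therefore remains to show that $P_p(A^c)=P_p(D^c)$, equivalently $P_p(A)=P_p(D)$, after which the two displays combine to give $P_p(A^c\cap D^c)\ge P_p(A^c)^2$, which is exactly the reduced inequality.

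The decisive step is the identity $P_p(A)=P_p(D)$, which I would obtain from the point reflection $\psi\colon i\mapsto \ell-i$ of $\Z^2$. This map is an isometry, hence an automorphism of the undirected square lattice, and it fixes the pair $\{0,\ell\}$ setwise, swapping $\psi(0)=\ell$ and $\psi(\ell)=0$. Being of the form ``$-\mathrm{id}$ followed by a translation'', it reverses every edge orientation: one checks directly from the definitions of $\pa$ and $\operatorname{ch}$ that $j\in\pa(i)$ if and only if $\psi(j)\in\operatorname{ch}(\psi(i))$, so $\psi$ interchanges ancestors and descendants, $\psi(\An(i))=\De(\psi(i))$. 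Since $P_p$ weights the edges by independent $\mathrm{Bernoulli}(p)$ variables and $\psi$ merely permutes the edge set, $P_p$ is invariant under the map it induces on configurations. Transporting the event $A$ by $\psi$ then gives
$$ P_p(A)\;=\;P_p\big(\{\De(\psi(\ell))\cap\De(\psi(0))\neq\emptyset\}\big)\;=\;P_p\big(\{\De(0)\cap\De(\ell)\neq\emptyset\}\big)\;=\;P_p(D). $$

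Putting the pieces together, $P_p(\Sigma^c)=P_p(A^c\cap D^c)\ge P_p(A^c)P_p(D^c)=P_p(A^c)^2$, and taking square roots and rearranging (legitimate as all quantities lie in $[0,1]$) yields the lemma. I expect the main obstacle to be the orientation-reversing symmetry rather than the FKG step: one must verify both that $\psi$ leaves $P_p$ invariant and that it genuinely sends the common-ancestor event for $\{0,\ell\}$ to the common-descendant event for the \emph{same} pair of nodes. The choice $\psi(i)=\ell-i$, which fixes $\{0,\ell\}$ setwise, is what makes this identification exact; a naive reflection $i\mapsto -i$ would move $\ell$ to $-\ell$ and force an extra appeal to translation invariance.
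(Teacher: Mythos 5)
Your proof is correct and follows essentially the same route as the paper: the paper likewise reduces the lemma to the observation that the common-ancestor and common-descendant events are increasing events of equal probability (obtained from the lattice symmetry that exchanges ancestors and descendants, stated there via negation plus translation invariance) and then applies the square-root trick, citing inequality (11.14) of Grimmett instead of re-deriving it from FKG as you do. Your single point reflection $\psi(i)=\ell-i$ is just the composition of those two symmetry steps, and your FKG computation is precisely the standard proof of the cited inequality, so the two arguments coincide in substance.
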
 

\begin{proof}[\textbf{\upshape Proof:}]
	{As before, for notational simplicity assume that $j=0$. By translation invariance we find}
	\begin{align*}
		P_p \big( \{ \operatorname{De} (i) \cap \operatorname{De} (0) \neq \emptyset \} \big) 
		= P_p \big( \{ \operatorname{An} (i) \cap \operatorname{An} (0) \neq \emptyset \} \big) ,
	\end{align*}
	more precisely, $\{ \operatorname{De} (i) \cap \operatorname{De} (0) \neq \emptyset \}$ and $\{ \operatorname{An} (i) \cap \operatorname{An} (0) \neq \emptyset \}$ are two increasing sets of equal probability. Recall from inequality (11.14) in \cite[p. 289]{Grimmett} that if $A_1, \ldots, A_m$ are increasing events with equal probability then
	$$ P_p (A_1) \geq 1 - \Bigg( 1 - P_p\Big( \bigcup_{j=1}^m A_j \Big) \Bigg)^{\frac{1}{m}}.$$
	Using this inequality we get
	\begin{align*}
		P_p \big( \{ \operatorname{An} (i) \cap \operatorname{An} (0) \neq \emptyset \} \big) & \geq 1 - \big( 1 - P_p \big(  \{ \operatorname{An} (i) \cap \operatorname{An} (0) \neq \emptyset \} \cup \{ \operatorname{De} (i) \cap \operatorname{De} (0) \neq \emptyset \} \big) \big) ^{\frac{1}{2}} = 1 - \big( 1 - P_p( \Sigma) \big) ^{\frac{1}{2}} .
	\end{align*}
\end{proof}

%\CK{Recall that for the subcritical case, where $p \leq \frac{1}{2}$, with probability 1 all clusters are finite. 
%In the supercritical case, however, where $p > \frac{1}{2}$, there is with probability one exactly one infinite open cluster, and perhaps several finite clusters.}

In what follows we need the analog $C^\to(k) : = \operatorname{An} ( k ) \cup \operatorname{De} ( k )$ of the open cluster $C(k)$ containing $k\in V$ in the oriented square lattice.
We denote by
$P_p ( |C^\to(k)| = \infty )$
the probability that there exists an oriented path from $k \in \mathbb{Z}^2$ to $\infty$, which is by translation-invariance independent of $k$. 
In \cite[Section 3]{D} it is shown that
$$p^* := \inf \{p \in (0,1) : P_p \big( |C^\to(0)| = \infty \big) > 0\}$$
holds for some critical probability $\frac{1}{2} < p^* < 1$.
%Hence, the value $p^*$ in Theorem \ref{th4.1} is precisely the critical probability of oriented percolation. 
The exact value for $p^*$ is unknown; however, it is known that $0,6298\le p^*<0,6735$ (\cite[Chapter 10]{Grimmett} and \cite{BBS}).

\begin{theorem}\label{th4.1}
	There exists $\frac{1}{2} < p^*<1$ with the following properties. For $p < p^*$ we have
	\beam\label{asind}
	\lim_{|i-j| \to \infty} P_p ( X_i\textnormal{ and } X_j \textnormal{ are independent} ) = 1.
	\eeam
	For $p >p^*$ there exists a constant ${0<C < 1}$ not depending on $|i-j|$ with
	\beam\label{asdep}
	{0 <} P_p ( X_i\textnormal{ and } X_j \textnormal{ are independent} ) \leq C.
	\eeam
\end{theorem}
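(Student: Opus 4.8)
The plan is to reduce the statement, via Proposition~\ref{TDC}, to a question about common ancestors in the percolation DAG, and then to treat the oriented-subcritical regime $p<p^*$ and the oriented-supercritical regime $p>p^*$ by separate arguments. By Proposition~\ref{TDC}, $X_i$ and $X_j$ are independent precisely when $\An(i)\cap\An(j)=\emptyset$; by translation invariance of $P_p$ I fix $j=0$ and set $\ell=i-j$, so that $|\ell|=|i-j|$ and
$$P_p(X_i\text{ and }X_j\text{ are independent})=1-P_p(A_\ell),\qquad A_\ell:=\{\An(\ell)\cap\An(0)\neq\emptyset\}.$$
All ancestral sets are taken in the random DAG determined by the configuration, so $\An(0)$ is the backward oriented cluster of $0$; the $180^\circ$ rotation $x\mapsto -x$ is a measure-preserving automorphism that reverses orientations, whence $\An(0)\eqd C^\to(0)$ and, similarly, $\An(\ell)-\ell\eqd\An(0)$.

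For $p<p^*$ I would prove \eqref{asind} by a radius estimate. Since $\An(0)\eqd C^\to(0)$ and $P_p(|C^\to(0)|=\infty)=0$ below $p^*$, the cluster $\An(0)$ is a.s.\ finite, so $R_0:=\max_{k\in\An(0)}\delta(0,k)$ is a.s.\ finite and $P_p(R_0\ge r)\to0$ as $r\to\infty$. On $A_\ell$ a common ancestor $k$ satisfies $\delta(0,k)\le R_0$ and $\delta(\ell,k)\le R_\ell$, so the triangle inequality together with $\delta(0,\ell)=|\ell|$ forces $R_0+R_\ell\ge|\ell|$. Using $R_\ell\eqd R_0$,
$$P_p(A_\ell)\le P_p\big(R_0+R_\ell\ge|\ell|\big)\le 2\,P_p\big(R_0\ge|\ell|/2\big)\longrightarrow 0\quad(|\ell|\to\infty),$$
which is exactly \eqref{asind}.

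For $p>p^*$ two bounds are needed. The lower bound $P_p(A_\ell)<1$, i.e.\ positivity of the independence probability, is elementary: closing the at most four edges that point into $0$ and into $\ell$ makes $\An(0)=\{0\}$ and $\An(\ell)=\{\ell\}$, which are disjoint once $\ell\neq0$; this has probability at least $(1-p)^4>0$, so $1-P_p(A_\ell)\ge(1-p)^4>0$. For the upper bound I must show $\inf_\ell P_p(A_\ell)>0$. By the rotation symmetry above, $P_p(A_\ell)=P_p(D_\ell)$ with $D_\ell:=\{\De(\ell)\cap\De(0)\neq\emptyset\}$, so it suffices to bound $P_p(D_\ell)$ below uniformly in $\ell$; equivalently, since $\Sigma_\ell=A_\ell\cup D_\ell\supseteq D_\ell$, one may bound $P_p(\Sigma_\ell)$ and invoke Lemma~\ref{el} (or the crude bound $P_p(A_\ell)\ge\tfrac12P_p(\Sigma_\ell)$).

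The crux is the claim that two infinite forward oriented clusters in the plane must meet. With $\theta:=P_p(|C^\to(0)|=\infty)>0$, the FKG inequality applied to the increasing events $\{|C^\to(0)|=\infty\}$ and $\{|C^\to(\ell)|=\infty\}$ gives $P_p(|C^\to(0)|=\infty,\,|C^\to(\ell)|=\infty)\ge\theta^2$. I would then invoke the structure theory of two-dimensional supercritical oriented percolation (cf.\ \cite{D}): on $\{|C^\to(0)|=\infty\}$ the cluster $\De(0)$ fills, with positive density, the space--time cone whose left and right edges grow linearly with speeds $\mp\alpha(p)$, and the cone attached to $\De(\ell)$ is the same up to the fixed shift determined by $\ell$; hence for large times the two cones overlap, and within the overlap the clusters coalesce almost surely (equivalently, the infinite oriented cluster is unique). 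Thus $D_\ell$ holds a.s.\ on $\{|C^\to(0)|=\infty,\,|C^\to(\ell)|=\infty\}$, giving $P_p(D_\ell)\ge\theta^2$ and hence $P_p(A_\ell)\ge\theta^2$, uniformly in $\ell$. Therefore $1-P_p(A_\ell)\le 1-\theta^2=:C<1$ for every $\ell$, which with the positivity above yields \eqref{asdep}. I expect this coalescence step to be the main obstacle: the finiteness and FKG arguments are routine, but the uniform-in-$\ell$ lower bound rests on the genuinely planar fact that two infinite oriented clusters of $\Z^2$ intersect, for which one needs the linear growth of the cluster edges and the uniqueness of the supercritical infinite oriented cluster.
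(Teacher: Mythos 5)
Your proof is correct, and it follows the same overall architecture as the paper's: reduction to common ancestors via Proposition~\ref{TDC}, translation invariance to fix one node at $0$ and set $\ell=i-j$, and Durrett's oriented-percolation threshold $p^*$ from \cite{D}. However, both halves are executed by genuinely different means. In the subcritical half, the paper invokes the exponential decay $P_p(|C^\to(0)|\ge n)\le Ce^{-\gamma n}$ of \cite[Section 7]{D} and bounds the dependence probability by $P_p(|C^\to(0)|\ge |\ell|)$, whereas you use only the qualitative fact that below $p^*$ the backward cluster $\An(0)$ is a.s.\ finite (via the orientation-reversing symmetry $\An(0)\eqd -C^\to(0)$), combined with the triangle-inequality union bound $P_p(A_\ell)\le 2P_p(R_0\ge |\ell|/2)$; your route is more elementary (no decay rate needed), while the paper's gives a rate. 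In the supercritical half, the paper's key lemma is Lemma~\ref{el} (the square-root trick based on Grimmett's inequality (11.14)) applied to $\Sigma$, together with FKG and the claim $P_p\big(\Sigma^\complement \mid |C^\to(i)|=\infty,\,|C^\to(j)|=\infty\big)=0$, which it attributes to Kolmogorov's zero-one law; you bypass Lemma~\ref{el} entirely by using the same orientation-reversing symmetry to write $P_p(A_\ell)=P_p(D_\ell)$ and then bounding $P_p(D_\ell)\ge\theta^2$ directly, where $\theta:=P_p(|C^\to(0)|=\infty)$, via FKG and the a.s.\ intersection of two infinite forward clusters. This yields the slightly sharper constant $C=1-\theta^2\le (1-\theta^2)^{1/2}$, and you also supply the elementary positivity bound $(1-p)^4$ for the independence probability, which the paper asserts (the strict inequality ``$1>$'' in its final display) without justification. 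Note that the nontrivial input is the same in both proofs — two infinite oriented clusters in the plane a.s.\ acquire common descendants (or ancestors) — and your attribution of this to the planar structure theory of \cite{D} (linear edge speeds, coupling with the full-line process, positive density in the overlapping cones) is arguably more accurate than the paper's bare appeal to the zero-one law, since the event in question is not a tail event; in this sense your write-up identifies the true crux more honestly, at the cost of leaning on deeper cited results.
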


\begin{proof}[\textbf{\upshape Proof:}]
	By translation-invariance the distribution of the above event only depends on the edge distance $|i|$. 
	%By translation invariance, identify $i$ with 0 and $j$ with $\ell=i-j$.
	We will make use of results on oriented percolation as discussed in \cite{D}. 
	%For $k \in \mathbb{Z}^2$ let $|C^\to(k)|$ denote the size of the north-east oriented cluster containing $k$. 
	In particular, in \cite[Section 7]{D} it is shown that
	$$ P_p ( |C^\to(k)| \geq n ) \leq C e^{-\gamma n}$$
	for some $C>0, \gamma >0$ decays exponentially as $n \to \infty$ for $p < p^*$, where $p^*$ is introduced above. From this and from Proposition~\ref{TDC} for every $p< p^*$ we obtain
	\begin{align*}
		P_p \big( X_i\textnormal{ and } X_j \textnormal{ are dependent} \big) & = P_p \big( \{ \operatorname{An} (i) \cap \operatorname{An} (0) \neq \emptyset \} \big) 
		\leq P_p ( |C^\to(0)| \geq |i|  \big)  \to 0
	\end{align*}
	as $|i|= |i-j| \to \infty$, giving \eqref{asind}. \\[2mm]
	In order to prove the second statement  we assume that $p > p^*$. Furthermore, let $\Sigma$ be the event in \eqref{revsigma} and let $\Sigma^\complement$ be its complement, which is the event that $i$ and $j$ have neither common ancestors nor descendants. Applying Kolmogorov's zero-one law one can easily deduce that for $i,j \in \mathbb{Z}^2$
	$$ P_p \Big( \Sigma^\complement \mathlarger{\mid} |C^\to(0)| = \infty, |C^\to(i)| = \infty \Big) = 0,$$
	which implies that
	$$  P_p \big( |C^\to(0)| = \infty, |C^\to(i)| = \infty \big) =  P_p \Big( \big\{ |C^\to(0)| = \infty, |C^\to(i)| = \infty \big\} \cap \Sigma \Big) \leq P_p(\Sigma ) .$$
	Hence, by Lemma~\ref{el} we can estimate for $p \neq 1$
	\begin{align*}
		1 & >  P_p \big( \{ \operatorname{An} (i) \cap \operatorname{An} (0) \neq \emptyset \} \big) \geq 1 - \big( 1 - P_p (\Sigma ) \big) ^{\frac{1}{2}}  {\geq 1 - \big( 1 - P_p \big( |C^\to(0)| = \infty, |C^\to(i)| = \infty \big) \big) ^{\frac{1}{2}} }\\
		& \geq 1 - \big( 1 - P_p \big( |C^\to(0)| = \infty \big) ^2\big) ^{\frac{1}{2}} > 0
	\end{align*}
	for every $|i|$, where the second last inequality follows from the FKG-inequality (\cite[Theorem 2.4]{Grimmett}). %, where $\partial B(|i-j|)$ denotes the boundary of $B(|i-j|)$. 
	Thus, in the supercritical phase, with positive probability one can generate {dependence between} random variables $X_i$ and $X_j$, which proves \eqref{asdep}.
\end{proof}

{Theorem~\ref{th4.1} links the subcritical and supercritical case to probabilities for dependence and independence of $X_i$ and $X_j$. 
	
	{For the communication in a Bernoulli bond percolation network, we conclude that for edges being open (communication channels) with small probability, extreme observations at two different nodes become a.s. independent, when nodes are far apart. However, if edges are open with high probability then there is a positive probability that two extreme values are observed dependently; i.e., there may be a common source.}
	
	Also further properties of $X_i$ and $X_j$ within the oriented square lattice $\Z^2$ can be derived similarly using percolation properties. 
	The following remark gives an example.}

\begin{remark}[Number of common ancestors per pair of nodes:]
	\begin{comment}
	(i) \, {\em Critical probability for the mean number of common ancestors of two nodes:} 
	Another critical probability with value known for the quadratic lattice graph $\Z^2$ is given in \cite{} as
	$$\ov p_c^1(V) = \inf\{p\in (0,1) : E_p(|C(0)|=\infty\} = p_c^1(V) =1/2.$$
	Define $A(i,j):=|\An (i) \cap \An (j)|$, then
	$$E_p(A(i,j)) = \infty \quad\mbox{if and only if}\quad p>1/2.$$
	\end{comment}
	%(ii) \, {\em Number of open clusters containing with two nodes also their common ancestors:} 
	Let $0\le p\le 1$ and $ A(i,j,n) := |\An (i) \cap \An (j) \cap B(n) |$ the number of common ancestors of $i$ and $j$ inside the box $B(n) = [-n ,n]^2$. 
	%Consider \cite[Theorem 4.2]{Grimmett}, replacing the number of open clusters of $B(n)$ there by $ A(i,j,n)$.
	Then by an ergodic theorem (cf. \cite[Theorem 4.2]{Grimmett}) $P_p$-a.s. and in $L^1(P_p)$,
	$$ \frac{1}{|B(n)|} \sum_{k,\ell\in B(n)\atop |k-\ell|=|i-j|} |A(k,\ell,n)|^{-1} \to E_p(|\An (i) \cap \An (j) |^{-1}),\quad \nto.$$ 
\end{remark}

\subsection{Enlargement of DAGs using Bernoulli percolation}\label{s42}

Throughout this section fix two nodes $i,j \in \ganz^2$. We are again interested in dependence properties of the random variables $X_i$ and $X_j$. 
We write $\mathcal{P}$ for the property that $X_i$ and $X_j$ are {dependent,} and for a DAG $G$ we write $G \in \mathcal{P}$  if a max-linear model $X$ on $G$ has the property that the components $X_i$ and $X_j$ are {dependent. }

Suppose that $H = \big( V(H), E(H) \big)$, $V(H) \subset \ganz^2$, is a sub-DAG of the oriented square lattice $\Z^2$ containing $i,j$ such that $X_i$ and $X_j$ are independent on $H$, equivalently $\An (i) \cap \An (j) \cap V(H) = \emptyset$ by Proposition \ref{TDC}; i.e., $H \notin \mathcal{P}$. We utilize a method introduced in \cite{Okamura} in order to enlarge the sub-DAG $H$ by adding possibly infinitely many nodes and edges of open clusters and investigate the probability that $X_i$ and $X_j$ become dependent on the randomly enlarged DAG. 

{In the framework of communication in a network, if two extremes are observed seemingly independent, we investigate if a possible dependence could arise by a different network of a network member $i$, which are not present in the original network. The following results answer this question.}

Recall that for $k \in \ganz^2$ the open cluster containing $k$ is denoted by $C(k)$. 
The following definition is taken from \cite[Definition 1.1]{Okamura}. For an analogous definition of enlargement of percolating everywhere subgraphs as in Theorem \ref{R2} below we also refer to \cite{BT}.

\begin{defi} \label{Ok}
	{For $0\le p\le 1$} let $U(H) = U (\omega, p, H)$ be the random subgraph of the oriented square lattice $\Z^2$ with node set 
	$$ V\big( U(H) \big)  = \bigcup_{k \in V(H)} V(C(k))$$
	and edge set
	$$ E\big( U(H) \big) = E(H) \cup \bigcup _{k\in V(H)} E(C(k)) .$$
\end{defi}

Note that by definition $U(H)$ is a DAG containing the nodes $i$ and $j$, as $H$ is assumed to contain $i$ and $j$. 
Furthermore, we add finitely many or possibly infinitely many nodes, according as $p \leq \frac{1}{2}$ or $p > \frac{1}{2}$. Moreover, Definition \ref{Ok} corresponds to percolation with underlying probability measure $P_p^H$ on $\{ 0,1\}^{E(\mathbb{Z}^2)}$ satisfying 
\beam\label{PH}
P_p^H (\omega_e =1 ) = 1\,\,\mbox{if}\,\, e \in E(H)\quad\mbox{and}\quad P_p^H (\omega_e =1 ) = p \,\,\mbox{else},
\eeam
for all $\omega_e \in \{ 0,1\}^{E(\mathbb{Z}^2)}$. In addition, we have by definition that
\beam\label{meas}
P_p \big( U(H) \in \mathcal{P} \big) = P_p^H \big( \An (i) \cap \An (j) \neq \emptyset \big).
\eeam

{One prerequisite is the measurability of the event \eqref{meas},
	%and Example~2.5 of that paper provides a triplet $(G,H,\mathcal{P})$ with a non-measurable event. 
	and we verify this by
	observing that $\{U(H) \in \mathcal{P} \}$ is equivalent to the existence of some $n \in \mathbb{N}$ such that $\operatorname{An} (i) \cap \operatorname{An} (j) \neq \emptyset$ holds on the ball $B(i,n) = \{ y \in \mathbb{Z}^2 : \delta (y,i) \leq n \}$ and, thus, $\{U(H) \in \mathcal{P}\}$ is determined by configurations of edges in a finite ball, and hence measurable.}

In analogy to \cite[Definition 1.3]{Okamura} we regard certain kinds of critical probabilities
\beam
\label{okcrit1} p_{c,1, \mathcal{P}, H} &:=& \inf \{ p \in [0,1] : P_p \big( U(H) \in \mathcal{P} \big) > 0 \}\\
\label{okcrit2} p_{c,2, \mathcal{P}, H} &:= &\inf \{ p \in [0,1] : P_p \big( U(H) \in \mathcal{P} \big) = 1 \}.
\eeam

We first remark that {$\{U(H) \in \mathcal{P}\}$ has positive probability for all $p> 0$,} such that $p_{c,1, \mathcal{P}, H} = 0$ holds, and the interesting question is for which choice of sub-DAGs $H$ we have $p_{c,1, \mathcal{P}, H} = p_{c,2, \mathcal{P}, H}$. As an easy example we might first consider the non-connected DAG $H$ with node set $V(H) = \{ i,j \}$ and $E(H) = \emptyset$. It is straightforward to see that $P_p ( U(H) \notin \mathcal{P}) > 0$ for every $p \in [0,1)$ and this implies $p_{c,2, \mathcal{P}, H} = 1 \neq p_{c,1, \mathcal{P}, H}$. 
On the other hand, the following Lemma gives an example of a DAG, where the latter assertion is not true, i.e. $p_{c,1, \mathcal{P}, H} = p_{c,2, \mathcal{P}, H} = 0$. 

\begin{lemma} \label{4ex} 
	Let $H$ be an infinite DAG with nodes $V(H) = \ganz^2$ and let $k\in \Z^2$ such that $i_1 \leq k_1 \leq j_1$. 
	Assume edges $E(H)$ only inside the set
	$$\Big( \mathbb{Z}^2 \setminus \{(k_1 \pm 1, i_2 - n) : n \in \mathbb{N}_0 \} \Big) \times \Big( \mathbb{Z}^2 \setminus \{(k _1\pm 1, i_2 - n) : n \in \mathbb{N}_0 \} \Big).$$
	Then $p_{c,2, \mathcal{P}, H} = 0$.
\end{lemma}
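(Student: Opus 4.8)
The plan is to prove the equivalent statement that $P_p^H\big(\An(i)\cap\An(j)\neq\emptyset\big)=1$ for every $p\in(0,1]$; since $\{U(H)\in\mathcal P\}$ is increasing (as recorded before \eqref{revsigma}) and this holds for all $p>0$, the infimum defining $p_{c,2,\mathcal P,H}$ in \eqref{okcrit2} is $0$. Throughout I read the hypothesis as: $E(H)$ consists of all oriented lattice edges with both endpoints in $S:=\Z^2\setminus\big(L_-\cup L_+\big)$, where $L_\pm:=\{(k_1\pm1,i_2-n):n\in\N_0\}$ are the two infinite downward half-columns (``barriers'') at horizontal coordinates $k_1\pm1$ and heights at most $i_2$. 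The geometric content of the hypothesis is that in $H$ no edge touches $L_-\cup L_+$, so in the deterministic part of $U(H)$ one cannot cross either barrier at any height $\le i_2$.

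First I would record the structural constraint a common ancestor must satisfy. Because edges are oriented north-east, any $m\in\An(i)\cap\An(j)$ obeys $m_1\le\min(i_1,j_1)=i_1\le k_1$ and $m_2\le\min(i_2,j_2)$; moreover a directed path from such $m$ to $j$ (which has $j_1\ge k_1$) is forced to cross the barriers. This explains why $i$ and $j$ are separated in $H$ and pinpoints what percolation must supply, namely a low crossing of $L_-$ and $L_+$.

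The heart of the argument is to manufacture infinitely many \emph{independent} chances for such a crossing and apply a Borel--Cantelli argument. For each integer $y\le\min(i_2,j_2)$ let $E_y$ be the event that the four horizontal edges
\[
(k_1-2,y)\to(k_1-1,y)\to(k_1,y)\to(k_1+1,y)\to(k_1+2,y)
\]
are all open. Each of these edges touches $L_-\cup L_+$, hence lies outside $E(H)$ and is open independently with probability $p$ under $P_p^H$, so $P_p^H(E_y)=p^4>0$; for distinct $y$ the events $E_y$ depend on disjoint edge sets and are therefore independent. On $E_y$ the node $m=(i_1,y)$ is a common ancestor of $i$ and $j$ in $U(H)$: the deterministic $H$-edges give a directed path straight up column $i_1$ from $m$ to $i$, while concatenating the $H$-path from $m$ rightward to $(k_1-2,y)$, the four open edges, and the $H$-path from $(k_1+2,y)$ up and right to $j$ yields a directed path from $m$ to $j$. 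Consequently $P_p^H\big(\text{no }E_y\text{ occurs}\big)=\prod_{y\le\min(i_2,j_2)}(1-p^4)=0$ for every $p>0$, so almost surely some $E_y$ occurs and $\An(i)\cap\An(j)\neq\emptyset$. This gives $P_p^H(U(H)\in\mathcal P)=1$ for all $p\in(0,1]$ and hence $p_{c,2,\mathcal P,H}=0$.

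The step I expect to require the most care is the handling of boundary configurations in which $i$ or $j$ lies on or immediately next to a barrier --- that is, $i_1\in\{k_1-1,k_1\}$ or $j_1\in\{k_1,k_1+1\}$ --- where the generic four-edge crossing degenerates (one needs to cross only a single barrier, or the common ancestor $(i_1,y)$ must be relocated one column to the west). Each such case is dealt with by the same device of infinitely many independent low crossings, only with a different finite set of percolation edges per level, so the conclusion is unchanged; I would state the generic case in full and dispatch these variants in a sentence. A secondary point worth making explicit is the reading of ``$E(H)$ only inside $S\times S$'': the deterministic backbone used above needs all lattice edges within $S$ to be present in $H$, and I would note that this is exactly the maximal such $H$, whose connected components are all infinite, which is the relevant case and the one linking this lemma to Theorem~\ref{R2}.
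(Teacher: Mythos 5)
Your core construction is correct in the generic geometry, but it follows a genuinely different route from the paper's own proof. The paper argues abstractly: it claims that $\{U(H)\notin\mathcal{P}\}$ ``does not depend on finitely many edges'', applies Kolmogorov's zero--one law to get $P_p(U(H)\notin\mathcal{P})\in\{0,1\}$, and rules out the value $1$ because $P_p(U(H)\in\mathcal{P})>0$ for every $p>0$. You instead exhibit, for each level $y\le\min(i_2,j_2)$, an explicit four-edge crossing event $E_y$ with $P_p^H(E_y)=p^4$, check independence across levels, verify that each $E_y$ produces the common ancestor $(i_1,y)$ through the deterministic backbone of $H$, and finish with $\prod_y(1-p^4)=0$. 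Your route actually buys something the paper's does not: in the nontrivial case $j_2\le i_2$ (the only one in which $H\notin\mathcal{P}$), the event $\{U(H)\in\mathcal{P}\}$ is \emph{not} literally a tail event --- a configuration whose only open random edges are the four edges of a single crossing lies in $\mathcal{P}$, and closing just those four edges takes it out of $\mathcal{P}$ --- so the zero--one law cannot be applied to it directly; what is true is that $\mathcal{P}$ contains the genuine tail event $\limsup_y E_y$, which is exactly what your Borel--Cantelli argument establishes. Your remark that all lattice edges inside $S$ must belong to $E(H)$ is likewise necessary rather than cosmetic: under a literal reading of ``edges only inside'' one could take $E(H)=\emptyset$, and then the conclusion fails, since $i$ is isolated in $U(H)$ with probability $(1-p)^4>0$.

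The genuine gap is your closing claim that the boundary configurations can be dispatched ``by the same device \ldots\ so the conclusion is unchanged''. For $i_1=k_1-1$ the conclusion is in fact false: then $i=(k_1-1,i_2)$ itself belongs to the barrier $L_-$, so no edge of $H$ touches $i$, and with probability $(1-p)^4>0$ all four lattice edges incident to $i$ are $P_p^H$-closed; on this event $\An(i)=\{i\}$ and $i\notin\An(j)$ (any directed path from $i$ to $j$ would have to leave $i$ through an open edge), hence $U(H)\notin\mathcal{P}$. Consequently $P_p\big(U(H)\in\mathcal{P}\big)\le 1-(1-p)^4<1$ for every $p<1$, so $p_{c,2,\mathcal{P},H}=1$, not $0$. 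The same local obstruction occurs when $j_1=k_1+1$ and $j_2\le i_2$, i.e.\ when $j\in L_+$. No system of low crossings can repair this, because the failure is local at $i$ (respectively $j$); these positions must be excluded, which is what the paper's Figure~\ref{Fig1} implicitly does by placing $i$ strictly to the left of both barriers and $j$ above them. To be fair, this is also a defect of the lemma as stated (its hypothesis $i_1\le k_1\le j_1$ permits these positions, and the paper's zero--one-law argument breaks there as well), but your proposal positively asserts that these cases go through, and that assertion is wrong. The remaining boundary cases you list --- $i_1=k_1$, $j_1=k_1$, and $j_1=k_1+1$ with $j_2>i_2$ --- do work as you describe, with shorter crossings.
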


\begin{proof} [\textbf{\upshape Proof:}]
	Fix $p \in (0,1)$. We show that $p_{c,2, \mathcal{P}, H} \leq p$ by calculating $P_p ( U(H) \notin \mathcal{P} )$. By choice of $H$ the event $\{ U(H) \notin \mathcal{P} \}$ does not depend on finitely many edges, see also Figure~\ref{Fig1}. {Hence, by Kolmogorov's zero-one law,}
	$$ P_p \big( U(H) \notin \mathcal{P} \big) \in \{ 0,1 \}.$$
	From $p \in (0,1)$ we further get $P_p ( U(H) \notin \mathcal{P} ) < 1$ and therefore $P_p ( U(H) \notin \mathcal{P} ) = 0$. This yields $P_p ( U(H) \in \mathcal{P} ) =1$ for every $p \in (0,1)$ and {concludes} the proof.
\end{proof}

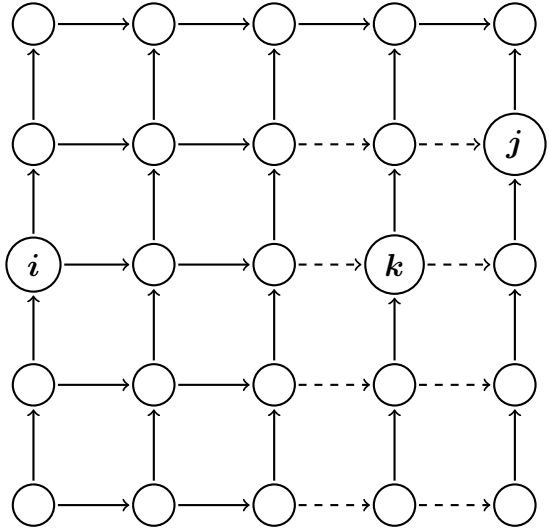
\begin{figure}
	\begin{center} 
		\begin{center}
			\hspace*{-1.1em}
			\begin{tikzpicture}[->,every node/.style={circle,draw},line width=0.8pt, node distance=1.6cm,minimum width=0.55cm,outer sep=0.5mm]
			%\begin{tikzpicture}[node distance = 4mm and 4mm, minimum width = 5mm]
			\node (11) {${\boldsymbol {i}}$};
			\node (21) [right of=11] {};
			\node (31) [right of=21] {};
			\node (41) [right of=31] {${\boldsymbol k}$};
			\node (51) [right of=41] {};
			\node (10) [above of =11]{};
			\node (20) [above of=21] {};
			\node (30) [above of=31] {};
			\node (40) [above of=41] {};
			\node (50) [above of =51]{${\boldsymbol j}$};
			\node (12) [below of =11]{};
			\node (22) [below of =21]{};
			\node (32) [below of =31]{};
			\node (42) [below of =41]{};
			\node (52) [below of =51]{};
			\node (13) [below of =12]{};
			\node (23) [below of =22]{};
			\node (33) [below of =32]{};
			\node (43) [below of =42]{};
			\node (53) [below of =52]{};
			\node (14) [above of =10]{};
			\node (24) [above of =20]{};
			\node (34) [above of =30]{};
			\node (44) [above of =40]{};
			\node (54) [above of =50]{};
			\foreach \from/\to in {11/21,21/31,10/20,20/30, 12/22,22/32,13/23,23/33}
			\draw (\from) -- (\to);
			\foreach \from/\to in {11/10,21/20,31/30,41/40,51/50,12/11,22/21,32/31,42/41,52/51, 13/12,23/22,33/32,43/42,53/52}
			\draw (\from) -- (\to);
			\foreach \from/\to in {10/14,20/24,30/34,40/44,50/54,14/24,24/34,34/44,44/54}
			\draw (\from) -- (\to);
			\foreach \from/\to in { 31/41,41/51,30/40,40/50, 32/42,42/52,33/43,43/53}
			\draw[dashed] (\from) -- (\to);
			\end{tikzpicture}
		\end{center}
	\end{center}
	\caption{\label{Fig1} Visualization of one possible example in Lemma~\ref{4ex}. Lines indicate edges, which may be present or not; dashed lines indicate edges not allowed in $H$.}
\end{figure}

If we inspect the examples presented so far we recognize that the number of nodes and edges of the chosen DAG $H$ has a strong impact on whether we have $p_{c,1, \mathcal{P}, H} = p_{c,2, \mathcal{P}, H}$ or not. The following result substantiates this observation.

\begin{theorem}\label{R1}
	Let $H$ be a DAG and $j\in V(H)$ such that the cluster containing $j$ is finite. Then we have $p_{c,2, \mathcal{P}, H}=1.$
\end{theorem}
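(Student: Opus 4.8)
The plan is to exhibit, for every $p<1$, a cylinder event of positive $P_p^H$-probability on which $X_i$ and $X_j$ stay independent in the enlarged graph $U(H)$. By \eqref{meas} this forces $P_p\big(U(H)\in\mathcal P\big)<1$ for all $p<1$, which together with $p=1$ (where all edges are open and $i,j$ trivially share common south-west ancestors) shows that the infimum defining $p_{c,2,\mathcal P,H}$ in \eqref{okcrit2} equals $1$.

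Let $K$ denote the connected component of $j$ in $H$, finite by hypothesis, and recall the standing assumption of this section that $H\notin\mathcal P$, i.e. $\An(i)\cap\An(j)\cap V(H)=\emptyset$. I would let $F$ be the set of all directed edges of $\Z^2$ that are incident to $K$ (at least one endpoint in $K$) and do not lie in $E(H)$. Since $K$ is a connected component, every $E(H)$-edge incident to $K$ has both endpoints in $K$; hence $F$ consists exactly of the non-$H$ edges touching $K$, and it is finite because $K$ is finite and each node has at most four incident lattice edges. I would then take $E^\ast=\{\omega_e=0\text{ for all }e\in F\}$. As no edge of $F$ belongs to $E(H)$, under $P_p^H$ these edges are closed independently with probability $1-p$, so $P_p^H(E^\ast)=(1-p)^{|F|}>0$ for every $p\in[0,1)$.

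The heart of the argument is to show $E^\ast\subseteq\{U(H)\notin\mathcal P\}$, i.e. $\An(i)\cap\An(j)=\emptyset$ in $U(H)$ on $E^\ast$. First, closing $F$ isolates $K$: any directed open path of $U(H)$ ending at $j$ must lie entirely in $K$, since otherwise it would enter $K$ through an edge with one endpoint inside $K$ and one outside, an edge of $F$ that is closed on $E^\ast$, contradicting openness of the path. As the only open edges inside $K$ are then those of $E(H)$, the $U(H)$-ancestors of $j$ coincide with its $H$-ancestors, so $\An(j)\subseteq K\subseteq V(H)$. The same isolation argument applied to directed paths starting in $K$ shows that no node of $K$ is a $U(H)$-ancestor of any node outside $K$. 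I would then split into two cases. If $i\notin K$, any common ancestor would have to lie in $\An(j)\subseteq K$ while also being an ancestor of $i$, which is impossible, as a directed path from $K$ to $i$ would again cross a closed edge of $F$; hence $\An(i)\cap\An(j)=\emptyset$. If $i\in K$, the same reasoning gives that the $U(H)$-ancestors of $i$ coincide with its $H$-ancestors, so the intersection $\An(i)\cap\An(j)$ computed in $U(H)$ equals $\An(i)\cap\An(j)\cap V(H)$, which is empty by the assumption $H\notin\mathcal P$.

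Combining these, $P_p^H\big(U(H)\notin\mathcal P\big)\ge P_p^H(E^\ast)=(1-p)^{|F|}>0$ for all $p<1$, whence $P_p\big(U(H)\in\mathcal P\big)<1$ on $[0,1)$ and $p_{c,2,\mathcal P,H}=1$. I expect the only delicate point to be the case $i\in K$: there one must check that closing the finitely many non-$H$ edges both inside and around $K$ reduces the ancestral structure on $K$ exactly to that of $H$, so that the hypothesis $H\notin\mathcal P$ can legitimately be invoked. The positivity and measurability of the cylinder event $E^\ast$, and the $p=1$ boundary computation, are routine.
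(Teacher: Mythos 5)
Your proof is correct, but it runs along a different (and in fact more elementary) route than the paper's. The paper proves the same key bound $P_p\big(U(H)\in\mathcal P\big)<1$ for all $p\in[0,1)$ via planar duality: it passes to the dual lattice $\mathbb{L}_d$ and argues that, with positive probability, there is a closed dual circuit containing $j$ in its interior and $i$ in its exterior, which blocks any common ancestry; finiteness of $j$'s component is what makes such a separating circuit possible with positive probability. You work entirely in the primal lattice: you close the finite set $F$ of non-$H$ edges incident to the component $K$ of $j$, note that under $P_p^H$ this cylinder event has probability $(1-p)^{|F|}>0$ (legitimate precisely because $F\cap E(H)=\emptyset$, so no forced-open edges are involved), and then show combinatorially that on this event the $U(H)$-ancestral sets reduce to the $H$-ancestral sets on $K$. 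The two arguments encode the same blocking idea — your closed edge set $F$ is essentially the primal trace of the paper's dual contour — but your version buys two things: it avoids the duality machinery altogether, and it treats cleanly the case $i\in K$, where a genuinely closed separating circuit cannot exist (any circuit separating $i$ from $j$ would have to cross an $H$-edge, which is open $P_p^H$-a.s.) and one must instead fall back on the section's standing assumption $H\notin\mathcal P$, i.e. $\operatorname{An}(i)\cap\operatorname{An}(j)\cap V(H)=\emptyset$ from Proposition~\ref{TDC}; the paper's phrase ``sub-path of closed edges of a circuit'' passes over this point, whereas you isolate it explicitly. Your closing remark settling the infimum at $p=1$ (via \eqref{okcrit2} and the trivial $p=1$ configuration) is also slightly more careful than the paper's ``by definition.'' As a minor bonus, your cylinder event gives the explicit quantitative estimate $P_p\big(U(H)\notin\mathcal P\big)\ge(1-p)^{|F|}$, which the duality argument does not state.
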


\begin{proof}[\textbf{\upshape Proof:}]
	Let $p <1$ and recall that
	$$P_p \big( U(H) \in \mathcal{P} \big) = P_p^H \big( \An (i) \cap \An (j) \neq \emptyset \big) .$$
	We prove the assertion by making use of planar duality arguments discussed in \cite[Section~1.4]{Grimmett}. Let $\mathbb{L}_d$ be the dual graph of $\ganz^2$ with nodes given by the set $\{ x + (\frac{1}{2}, \frac{1}{2}) : x \in \ganz^2 \}$ and edges joining two neighboring nodes so that each edge of $\mathbb{L}_d$ is crossed by a unique edge of its dual $\ganz^2$. As introduced in \cite[Section~1.4, p. 16]{Grimmett} an edge of the dual is declared to be open if it crosses an open edge of $\ganz^2$ and closed otherwise. Recall that a circuit of $\mathbb{L}_d$ is an alternating sequence $k_0, e_0, k_1, e_1, \ldots, k_n, e_n, k_0$ of nodes $k_0, \ldots, k_n$ and edges $e_0, \ldots, e_n$ forming a cyclic path from $k_0$ to $k_0$.
	
	Let $A$ be the event that there is a sub-path of closed edges of a circuit containing $j$ in its interior and $i$ in its exterior. Since the connected component containing node $j$ is finite, we have
	$$ 0 < P_p^H (A) \leq  P_p^H \big( \An (i) \cap \An (j) = \emptyset \big)$$
	which yields $ P_p \big( U(H) \in \mathcal{P} \big) <1$,
	for every $p \in [0,1)$. Thus, by definition we get $p_{c,2, \mathcal{P}, H}=1$ as claimed.
\end{proof}

\begin{cor} \label{bem}
	Let $H$ be a finite DAG. % with finite number of nodes; i.e., $| V(H)| < \infty$ and $|E(H)| <\infty$. 
	Then we have $p_{c,2, \mathcal{P}, H}=1.$
\end{cor}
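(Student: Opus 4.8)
The plan is to obtain this as an immediate consequence of Theorem~\ref{R1}, since a finite DAG is precisely the situation in which \emph{every} connected component—in particular the one containing $j$—is automatically finite. Concretely, recall that throughout Section~\ref{s42} the nodes $i,j$ are fixed and $H$ is assumed to contain them, so $j\in V(H)$. If $H$ is a finite DAG, then $|V(H)|<\infty$, and the connected component of $H$ containing $j$ is a subset of $V(H)$; hence it has at most $|V(H)|<\infty$ nodes and is finite. Thus the hypotheses of Theorem~\ref{R1} are met verbatim, and applying that theorem yields $p_{c,2,\mathcal{P},H}=1$.

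First I would state the reduction explicitly: the only thing to check is that the qualitative assumption of Theorem~\ref{R1} (finiteness of the component of $j$) is implied by the stronger assumption here (finiteness of all of $H$). This is the entire content of the argument, and there is essentially no obstacle to overcome—the corollary is a specialization of the theorem from ``the component of $j$ is finite'' to ``$H$ is finite.'' I would therefore keep the proof to a single sentence invoking Theorem~\ref{R1}.

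If one preferred a self-contained argument that does not merely cite Theorem~\ref{R1}, I would reproduce its planar-duality mechanism directly: because the component of $j$ in $H$ is finite, with positive probability one can surround $j$ by a circuit of the dual lattice $\mathbb{L}_d$ consisting of closed edges whose interior contains $j$ but not $i$, so that in $U(H)$ the node $i$ cannot be joined to $j$. This gives $P_p^H\big(\An(i)\cap\An(j)=\emptyset\big)>0$, hence $P_p\big(U(H)\in\mathcal{P}\big)<1$ for every $p\in[0,1)$, and therefore $p_{c,2,\mathcal{P},H}=1$ by the definition in \eqref{okcrit2}. The potential subtlety in such a standalone version is ensuring that the separating dual circuit can be built entirely from the randomized edges (those outside $E(H)$), which is exactly what the finiteness of the component containing $j$ guarantees; but since Theorem~\ref{R1} already packages this, the cleanest route is simply to cite it.
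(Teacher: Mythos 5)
Your proposal is correct and matches the paper exactly: the paper gives no separate proof of Corollary~\ref{bem}, treating it precisely as the immediate specialization of Theorem~\ref{R1} you describe, since finiteness of $H$ forces the connected component of $j$ to be finite. Your optional self-contained duality argument is just an inlining of the paper's proof of Theorem~\ref{R1}, so nothing new is needed there either.
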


\begin{remark}
	Corollary \ref{bem} enlightens the fact that the events $\{ \An (i) \cap \An (j) \neq \emptyset \}$ and $\{ i \leftrightarrow j\}$ are essentially different. Indeed, if we choose a DAG $H \notin \mathcal{P}$ with $\{ \De (i) \cap \De (j) \neq \emptyset \}$ we have for every $0\leq p<1$,
	$$P_p^H ( i \leftrightarrow j ) = 1, \quad P_p^H \big( \An (i) \cap \An (j) \neq \emptyset \big) < 1,$$
	where $\{i \leftrightarrow j \}$ denotes the event that there is a path of open edges from $i$ to $j$. 
\end{remark}

Now we want to examine DAGs with the property that $p_{c,1, \mathcal{P}, H}= p_{c,2, \mathcal{P}, H} = 0$. In Lemma~\ref{4ex} we gave an example of a sub-DAG $H$ satisfying this equality.
We can prove the same identity for the class of percolating everywhere subgraphs, which is an analogous result to \cite[Theorem 1.13 (i)]{Okamura}. 
According to \cite{BH}, a {sub-DAG} $H$ is called {\em percolating everywhere} if $V(H) = \mathbb{Z}^2$ and every connected component of $H$ is infinite.

\begin{theorem} \label{R2}
	Let $H$ be a percolating everywhere sub-DAG of the oriented square lattice $\Z^2$. Then we have $p_{c,2, \mathcal{P},H}=0.$
\end{theorem}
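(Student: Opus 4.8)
The goal is to show $p_{c,2,\mathcal{P},H}=0$, i.e.\ that $P_p\big(U(H)\in\mathcal{P}\big)=P_p^H\big(\An(i)\cap\An(j)\neq\emptyset\big)=1$ for \emph{every} $p\in(0,1)$. I would therefore fix $p\in(0,1)$ and work under the measure $P_p^H$ from \eqref{PH}, under which every edge of $H$ is deterministically open and all remaining edges are open independently with probability $p$. Two facts are already available: the event $\{U(H)\in\mathcal{P}\}=\{\An(i)\cap\An(j)\neq\emptyset\}$ is increasing and measurable, and, as remarked after \eqref{okcrit2}, it has strictly positive probability for all $p>0$, so that $p_{c,1,\mathcal{P},H}=0$. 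The whole task is thus to upgrade ``positive probability'' to ``probability one''.

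The plan is to run the same Kolmogorov zero-one law argument used in Lemma~\ref{4ex}, but now justifying the key insensitivity statement by the percolating-everywhere hypothesis rather than by a special choice of $H$. Concretely, I would prove that the complementary event $\{U(H)\notin\mathcal{P}\}=\{\An(i)\cap\An(j)=\emptyset\}$ does not depend on any finite set of the random non-$H$ edges; equivalently, it lies in the tail $\sigma$-field generated by the independent Bernoulli variables $\{\omega_e:e\notin E(H)\}$. Granting this, Kolmogorov's zero-one law (\cite[Theorem~1.11]{Grimmett}) forces $P_p^H\big(U(H)\in\mathcal{P}\big)\in\{0,1\}$, and combined with $p_{c,1,\mathcal{P},H}=0$, which rules out the value $0$, it yields $P_p^H\big(U(H)\in\mathcal{P}\big)=1$ for every $p\in(0,1)$, hence $p_{c,2,\mathcal{P},H}=0$.

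To establish the insensitivity, I would argue that a common ancestor, if it can be produced at all, can always be produced while avoiding any prescribed finite edge set $F$. Fix such an $F$ and choose $n$ with $F\subset B(i,n)$. Because $H$ percolates everywhere, the connected components of $H$ containing the relevant nodes are infinite and extend outside $B(i,n)$; one then reroutes the two directed paths witnessing a common ancestor so that, outside a controlled neighbourhood, they travel along the always-open edges of these infinite components and use random edges disjoint from $F$ to descend to a common meeting node far to the south-west. Since the edges of $H$ are always open, flipping the edges in $F$ can neither create nor destroy such a rerouted witness, so $\{U(H)\in\mathcal{P}\}$ is determined by the configuration off $F$; as $F$ was arbitrary, the event is a tail event. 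This step also explains why the hypothesis is indispensable: the infinite $H$-components are exactly what allows directed paths to be rerouted around finite obstructions.

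The main obstacle is precisely this rerouting while respecting the \emph{orientation}. Unlike the undirected connectivity event $\{i\leftrightarrow j\}$, which, as the remark after Corollary~\ref{bem} stresses, is genuinely different, a common ancestor requires two monotone south-west directed paths meeting at a single node, and such paths cannot wander freely through an infinite component. One must verify that the percolating-everywhere structure still supplies directed-compatible detours reaching arbitrarily far to the south-west, independently of the finite set $F$. Making this geometric claim precise --- or, if a direct tail argument turns out to be awkward, replacing it by an equivalent construction of infinitely many pairwise edge-disjoint (hence independent) attempts at a common ancestor, each succeeding with probability bounded below by a constant $\delta>0$ via the FKG inequality, so that $P_p^H\big(U(H)\notin\mathcal{P}\big)\le\prod_{m\geq1}(1-\delta)=0$ --- is where the real work lies.
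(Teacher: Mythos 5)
Your overall skeleton does coincide with the paper's: both arguments reduce Theorem~\ref{R2} to showing that $P_p^H\big(\An(i)\cap\An(j)\neq\emptyset\big)\in\{0,1\}$ for every fixed $p\in(0,1)$, and then use $p_{c,1,\mathcal{P},H}=0$ to exclude the value $0$. Where you diverge is in how the zero--one dichotomy is justified. The paper does not claim a tail property of the event; it imports the quotient-graph construction from Okamura's Theorem~1.13: one declares $k\sim\ell$ iff $P_p^H(k\leftrightarrow\ell)=1$, and treats separately the case where the quotient has a single class (then $U(H)$ is a.s.\ connected with node set $\Z^2$) and the case of at least two classes (then a partition of the quotient with infinitely many crossing edges is extracted, $H$ is seen to have infinitely many components, and Kolmogorov's zero--one law is invoked). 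Your route instead rests entirely on the insensitivity claim that $\{\An(i)\cap\An(j)\neq\emptyset\}$ does not depend on any finite set of non-$H$ edges, and you yourself flag the proof of that claim (or of its FKG/edge-disjoint-attempts surrogate) as ``where the real work lies''. That unproven step is a genuine gap, not a technicality.

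Worse, the step is not fillable: the insensitivity claim is false, for precisely the orientation reason you raise. Every directed path ending at $i$ must enter $i$ through one of its two parent edges, and if neither of these lies in $E(H)$, no rerouting through infinite \emph{undirected} $H$-components can avoid them. Concretely, let $H$ consist of all horizontal edges of $\Z^2$ except those of the form $\big((-1,b),(0,b)\big)$, $b\in\Z$; then $V(H)=\Z^2$ and every component of $H$ is an infinite horizontal ray, so $H$ percolates everywhere. Take $i=(0,0)$ and $j=(0,1)$. On the event that the three edges $\big((-1,0),(0,0)\big)$, $\big((0,-1),(0,0)\big)$, $\big((0,0),(0,1)\big)$ are all closed --- an event of probability $(1-p)^3>0$ determined by finitely many random edges --- one has $\An(i)=\{i\}$, and $i\notin\An(j)$ because the unique directed lattice path from $(0,0)$ to $(0,1)$ is that single vertical edge; hence $\An(i)\cap\An(j)=\emptyset$ regardless of the configuration elsewhere. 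So the event is not a tail event; your alternative construction fails for the same reason, since every ``attempt'' at a common ancestor must use one of these three edges, so there are at most three pairwise edge-disjoint attempts, not infinitely many; and in fact $P_p^H\big(\An(i)\cap\An(j)\neq\emptyset\big)\le 1-(1-p)^3<1$ for all $p<1$, i.e.\ $p_{c,2,\mathcal{P},H}=1$ for this $H$. This shows the obstacle you identified is fatal to the statement with the purely undirected hypothesis ``percolating everywhere'' (some directed strengthening, e.g.\ that every node has a parent edge in $H$, would be needed), and it is instructive that the paper's own proof is vulnerable at the same spot: its first case deduces common ancestry from undirected connectivity of $U(H)$, which is exactly the conflation that the remark after Corollary~\ref{bem} warns against. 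So your instinct about where the difficulty sits was correct; the conclusion to draw is that it cannot be circumvented, not that it remains to be worked out.
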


\begin{proof}[\textbf{\upshape Proof:}]
	The proof partially relies on the proof of \cite[Theorem 1.13]{Okamura}. 
	As there we work with the probability measure $P_p^H$ on $\{ 0,1\}^{E(\mathbb{Z}^2)}$
	given in \eqref{PH}.
	%by $P_p^H ( \omega_e = 1) = 1$ if $e \in E(H)$ and $P_p^H ( \omega_e = 1) = p$ else. 
	Let $J$ be the graph with node set
	$$V(J) = \{ (k_1, k_2) \in \ganz^2 : k_1 \leq i_1, k_2 \leq i_2 \} \cup \{ (k_1, k_2) : k_1 \leq j_1, k_2 \leq j_2 \} .$$
	Note that if $J$ is connected then $\An (i) \cap \An (j) \neq \emptyset$. 
	Define the equivalence relation $k \sim \ell$ {on $\Z^2$} if and only if $P_p^H ( k \leftrightarrow \ell) =1$. 
	Denote by $[k]$ the equivalence class containing $k$ and ${\Z^2/_{\sim}=} Z' = Z' (\omega)$ the (Bernoulli) quotient graph with node set given by
	$$ V(Z') = \{ [k] : k \in \mathbb{Z}^2\} .$$
	If $| V(Z') | = 1$ then 
	$$ P_p^H \big( U(H) \textnormal{ is connected and } V(U(H) )= \mathbb{Z}^2 \big) = 1.$$
	Thus, with probability one there exists $k \in \operatorname{An} (i) \cap J$ with $k \leftrightarrow j$ so that $\operatorname{An} (i) \cap \operatorname{An} (j) \neq \emptyset$.\\[2mm]
	Now assume that $| V(Z') | \geq 2.$ 
	For sets $A,B \subset \mathbb{Z}^2$ let
	$$ E(A,B) = \{ (a,b) \in E (\mathbb{Z}^2) : a \in A, b \in B \}.$$
	By the same arguments as in the proof of \cite[Theorem 1.13]{Okamura} we can choose a partition $V( Z') = A \cup B$, $A \cap B = \emptyset$ with $|E(A,B)| = \infty$. 
	At this point observe that the number of connected components of $H$ is infinite, otherwise we would have $|E(A, B)| < \infty$ for every partition $V(Z') = A \cup B$. 
	Thus, by an application of Kolmogorov's  zero-one law we have 
	$$P_p^H \big( \An (i) \cap \An (j) = \emptyset \big) \in \{ 0,1 \},$$
	so that $P_p \big( U(H) \in \mathcal{P} \big) \in \{ 0,1 \}.$ This in particular implies that $p_{c,2, \mathcal{P},H}=p_{c,1, \mathcal{P},H}=0$ by definition and concludes the proof.
\end{proof}

\section{Communication networks }\label{s5}

{As indicated before, the question we answer here by means  of a simple probabilistic model is the following: given an extreme observation in a communication network, observed at two nodes, is there a common cause (a common ancestor) in the network or in an enlarged network or not.}

%As already hinted in the preceding sections there may be analogies between the framework in this paper and communication networks. The aim of this section is to propose possible ways of interpreting our results.

	In terms of the propagation of influence, every node may be interpreted as a network member, a directed edge between two nodes may be seen as a communication channel, and the weights represent the degree of influence between two members. 
	A phase transition in such a network indicates the non-existence or existence of a common cause of extreme observations of two different network members.}

Probabilistic communication models using tools from percolation theory to investigate phase transitions in graph structures are numerous in the literature; see e.g., \cite[Ch.~13]{Grimmett}, \cite{Liggett}, and \cite[Part IV]{Newman}, %Aldous,  (information flow through networks), Bernoulli, and references therein, hat Zeitkomponente, passt nicht zu uns\\
%Bhamidi Universal techniques to analyze preferential attachment trees:
%Global and Local analysis 2017, ebenfalls mit Zeit\\
to name only a few. They model spread of diseases, voter behaviour, optimal behaviour of market agents, etc. within nearest neighbor lattice graphs, in preferential attachment models, or in small-world networks.

A basic model is explained in \cite{Dreyer} as follows: the authors assume the network nodes to take values randomly in $\{ 0,1\}$ representing two possible states.
A network member changes its state provided enough neighbours share a different state.
%\CK{The two values 0,1 could be in our model non-extreme and extreme opinions.}\\
In contrast to this simple model, in the present paper the community members at every node exhibit observations, which can be modeled by any distribution, thus allowing for a more refined analysis and larger scope of interpretation for applications. For example, as already mentioned in the introduction, we can model the course of an auction. In this sense a community member represents a bidder in an auction, and we observe the bid placed by this person. Hence, the bid (for example money in dollars) is modeled by the random variable $X$. Since the purpose of a bid is to overbid the previous offers, a propagation by means of max-linear behavior is plausible, in which the noise variables $Z$ represent the amount of money the bidder is willing to spend independently, and in several cases depending of the type of auction a heavy-tailed distribution might be required.
%In the context of extreme opinions, the extreme value distributions with support $(0, \infty)$ as in \eqref{MLproofone} are %envisioned.
%Large values at a node may correspond to a sturdy opinion, whereas small values near zero may be interpreted as moderate opinion or uncertainty. 
One possible and eligible question of interest is to understand cause and effect of such extreme observations.

\bexam\label{comm}
Consider two arbitrary choices of finite communication networks modeled by $X$ as in Definition \ref{MLP}. 
More precisely, let $H_1$ be the DAG with nodes represented by $V =\{1,2,3\}$ and edge-set $E=\{ (2,3)\}$ consisting of one single edge, i.e. we have three network members and only $X_2$ and $X_3$ communicate, where $X_3$ is influenced by $X_2$. We assume the second DAG $H_2$ to be obtained from $H_1$ simply by adding the edge $(1,3)$, i.e. $X_1$ and $X_3$ start to communicate and $X_3$ is influenced by more than one source. Assume that the nodes and edges are equipped with positive weights $c_{ij}$, $i,j \in \{1,2,3\}$, and for $i \neq j$ we have $c_{ij} \neq 0$ if and only if there is an edge from $i$ to $j$. We now want to characterize the communication activities with the aid of max-linear coefficient matrices. For two matrices $M_1, M_2$ of same size we write $M_1 \preceq^0 M_2$ if all non-zero entries of $M_1$ are also non-zero entries of $M_2$ and there exists a zero entry of $M_1$ which is a non-zero entry of $M_2$. Let $B_1$ and $B_2$ be the max-linear coefficient matrices corresponding to $H_1$ and $H_2$, respectively. Applying the path analysis mentioned in Section 2 (cf. Theorem~2.4 of \cite{GK}) we obtain
$$B_1=\begin{pmatrix}
c_{11} & 0&0 \\
0 & c_{22} & c_{22} c_{23}  \\
0 & 0 &  c_{33}
\end{pmatrix} , \qquad
B_2=\begin{pmatrix}
c_{11} & 0&c_{11} c_{13} \\
0 & c_{22} & c_{22} c_{23}  \\
0 & 0 &  c_{33}
\end{pmatrix},$$
so that $B_1 \preceq^0 B_2$. Note that this stems from the fact that $H_2$ contains the edge $(1,3)$ not included in $H_1$. Thus, inspecting zero entries of the max-linear coefficient matrix helps in detecting communication channels. 
\eexam

Such observation holds in general and we summarize it in the following result.

\begin{prop}
	Let $X$ be a max-linear process with node-set $V$ and let $H_1$ and $H_2$ be two DAGs over the same finite set of nodes $V^H \subset V$ and max-linear coefficient matrices $B_1$ and $B_2$, respectively. If $B_1 \preceq^0 B_2$ then $H_2$ has more communication channels than $H_1$.
\end{prop}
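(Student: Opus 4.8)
The plan is to translate the condition $B_1 \preceq^0 B_2$ on the zero--nonzero patterns of the two coefficient matrices into a statement about the edge sets $E(H_1)$ and $E(H_2)$, reading ``more communication channels'' as $E(H_1) \subsetneq E(H_2)$, i.e. $H_2$ carries strictly more directed edges than $H_1$. The bridge is the path characterization recalled in Remark~\ref{rem23} (cf. \cite[Theorem~2.2]{GK}): for a recursive max-linear model on a DAG $H$ over $V^H$ with coefficient matrix $B = (b_{ji})$ one has $b_{ji} > 0$ if and only if $j \in \An(i)$, the ancestral set being taken in $H$. Since all edge weights are positive, this equivalence depends only on $H$ and not on the particular weights, so the off-diagonal nonzero pattern of $B_k$ is exactly the ancestral (reachability) relation of $H_k$ for $k=1,2$.

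First I would establish the inclusion $E(H_1) \subseteq E(H_2)$. Let $j \to i$ be an edge of $H_1$. Then $j \in \an(i)$ in $H_1$, hence $b_{ji}^{(1)} > 0$, and by $B_1 \preceq^0 B_2$ also $b_{ji}^{(2)} > 0$, so $j \in \An(i)$ in $H_2$. Here I would invoke the lattice structure of Section~\ref{s21}: an edge of the oriented square lattice joins nearest neighbours, so $j \to i \in E(H_1)$ forces $\delta(i,j) = 1$ with $j$ a potential parent of $i$. For such a pair every directed path in $H_2$ from $j$ to $i$ is monotone in both coordinates and must therefore reduce to the single direct edge; consequently $j \in \An(i)$ in $H_2$ already implies $j \to i \in E(H_2)$. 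This yields $E(H_1) \subseteq E(H_2)$.

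It then remains to upgrade this to a strict inclusion, and this is where the second clause of $\preceq^0$ enters. By assumption there is an entry vanishing in $B_1$ but positive in $B_2$, i.e. a pair $(j,i)$ with $j \notin \An(i)$ in $H_1$ but $j \in \An(i)$ in $H_2$. Fix a directed path from $j$ to $i$ in $H_2$ realizing this reachability. If every edge of this path belonged to $E(H_1)$, then $j$ would already be an ancestor of $i$ in $H_1$, contradicting $b_{ji}^{(1)} = 0$. Hence the path uses at least one edge of $E(H_2) \setminus E(H_1)$, and together with $E(H_1) \subseteq E(H_2)$ this gives $E(H_1) \subsetneq E(H_2)$, so $|E(H_1)| < |E(H_2)|$ and $H_2$ has strictly more communication channels.

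The step I expect to be the main obstacle is the first inclusion, precisely because the nonzero pattern of $B_k$ records the transitive closure (reachability) of $H_k$ rather than its edge set, and in a general DAG the transitive closure does not determine the edges. What rescues the argument is the lattice geometry: since edges are confined to nearest-neighbour pairs and directed lattice paths are coordinatewise monotone, reachability between a potential parent--child pair is equivalent to the presence of the direct edge. I would state this monotonicity observation carefully, as it is the only place where the special structure of $\ganz^2$ (rather than an arbitrary DAG) is used; the strictness in the final step, by contrast, needs only that reachability is the transitive closure of the edge relation and holds on any graph.
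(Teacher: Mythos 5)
Your proof is correct, and it is worth noting that the paper contains no proof of this proposition at all: it is stated immediately after Example~\ref{comm} with the phrase ``such observation holds in general,'' and the example in fact only verifies the \emph{converse} direction (adding the edge $(1,3)$ to $H_1$ produces $B_1 \preceq^0 B_2$), whereas the proposition asserts the forward implication that you prove. Your argument is the natural rigorization via the path-analysis characterization of Remark~\ref{rem23} (cf.\ \cite[Theorem~2.2]{GK}), namely that $b_{ji}>0$ if and only if $j\in\An(i)$, and its genuine added value is the step the paper glosses over: the nonzero pattern of a max-linear coefficient matrix records \emph{reachability}, not adjacency, so one must explain why containment of reachability relations forces containment of edge sets. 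Your lattice-geometry observation settles this correctly --- along any directed path in the oriented lattice the coordinate sum increases by exactly one per step, so for a nearest-neighbour pair $j\to i$ the only directed path from $j$ to $i$ is the edge itself --- and this appeal to the standing assumption of Section~\ref{s21} (all DAGs are sub-DAGs of the oriented $\Z^2$) is genuinely necessary rather than cosmetic: for abstract DAGs on a common node set the statement fails, e.g.\ the star $1\to 2$, $1\to 3$, $1\to 4$ and the path $1\to 2\to 3\to 4$ satisfy $B_1\preceq^0 B_2$ yet both have three edges. Your strictness step (a witnessing pair $(j,i)$ with $b^{(1)}_{ji}=0$ and $b^{(2)}_{ji}>0$ yields a directed path in $H_2$ that must use an edge outside $E(H_1)$) is likewise correct and, as you note, needs no lattice structure. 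In short: your proof is complete, fills a gap the paper leaves open, and isolates exactly where the lattice geometry is used.
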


Theorem~\ref{th4.1} gives rise to the following obvious interpretation. 
For a network with only moderately many communication channels, extreme observations at two nodes, which are far apart, are a.s. independent. 
However, in a highly communicative network, there may be a common source for an extreme observation presented at a specific node.

We now want to interpret the results in Section~\ref{s42} concerning random DAGs obtained from Bernoulli bond percolation clusters. Randomly added nodes and edges correspond to the formation of additional communication channels.
Consider the probability $p$ of an edge being open in the original network. 
For high values of $p$ the influences are more likely to spread.
We investigate this in more detail for a DAG $H$.
%Assume that $H$ enters a broader communication network system with randomly spreading influences.
Assume that members of $H$ hold additional communication channels outside the communication network.
We call the combined network a {network with randomly spreading influences}.
What is the probability that two network members with independent observations become influenced by the same source in the combined larger network? 

Theorems~\ref{R1} and Corollary~\ref{bem} describe a situation, where %enlighten the fact that 
the answer rather depends on the number of participants in the network and not so much on the structure of communication channels. 
This observation may be helpful in order to detect extreme observations simply by considering how many agents are affected by the spread of influences. 
In a wide sense, our results propose that extreme influences are less likely to spread if less agents are affected, being more decisive than the structure of communication channels. 

\begin{example} [Continuation of Example~\ref{comm}] \label{comm2} To precise these arguments we again compare two finite networks $H_1$ and $H_2$. 
	By Corollary~\ref{bem} two independent observations become influenced with certainty by a common source inside a network with randomly spreading influences, if these influences disseminate almost surely and only in this case, regardless of the setup of connections inside the network. 
	Recall that here $p$ can be regarded as the probability that a communication channel emerges. 
	In such a case we have $p=1$, which may correspond to very strong influences.
\end{example}

Theorem~\ref{R2} on the other hand, describes the situation, where the network has already many communication channels itself. Only some links between large communication communities are missing. Then links between these large communication communities are created a.s. whenever some randomly spreading influence arrives in the network at all. 

\section*{Acknowledgments}

We take pleasure in thanking the Mathematical Research Institute Oberwolfach for an invitation to the Research in Pairs Programme. This visit was extremely stimulating and fruitful for our research. Moreover, we would like to thank Markus Heydenreich for careful reading and comments which helped us to improve the manuscript. Additionally, we thank the Editor, Associate Editor and referees for  helpful comments.

\bibliographystyle{myjmva}
%\section*{}

\end{document}